\def\N{{\mathbb{N}}}
\def\R{{\mathbb{R}}}
\begin{document}
\title[Infinite dimension]{Infinite dimensional multipliers and Pontryagin principles for discrete-time problems}

\author[Bachir and Blot]
{Mohammed Bachir and Jo${\rm \ddot e}$l Blot} 

\address{Mohammmed Bachir: Laboratoire SAMM EA4543,\newline
Universit\'{e} Paris 1 Panth\'{e}on-Sorbonne, centre P.M.F.,\newline
90 rue de Tolbiac, 75634 Paris cedex 13,
France.}
\email{Mohammed.Bachir@univ-paris1.fr}

\address{Jo\"{e}l Blot: Laboratoire SAMM EA4543,\newline
Universit\'{e} Paris 1 Panth\'{e}on-Sorbonne, centre P.M.F.,\newline
90 rue de Tolbiac, 75634 Paris cedex 13,
France.}
\email{blot@univ-paris1.fr}
\date{June 28th 2016}

\numberwithin{equation}{section}
\newtheorem{theorem}{Theorem}[section]
\newtheorem{lemma}[theorem]{Lemma}
\newtheorem{example}[theorem]{Example}
\newtheorem{remark}[theorem]{Remark}
\newtheorem{definition}[theorem]{Definition}
\newtheorem{proposition}[theorem]{Proposition}
\newtheorem{corollary}[theorem]{Corollary}
\begin{abstract}
The aim of this paper is to provide improvments to Pontryagin principles in infinite-horizon discrete-time framework when the space of states and of space of controls are infinite-dimensional. We use the method of reduction to finite horizon and several functional-analytic lemmas to realize our aim. 
\end{abstract}
\maketitle
\vskip1mm
\noindent
Key Words: Pontryagin principle, infinite horizon, difference equation, difference inequation, Banach spaces, Baire category.\\
M.S.C. 2010: 49J21, 65K05, 39A99, 46B99, 54E52.
\section{Introduction}
We treat of problems of Optimal Control in infinite horizon which are governed by discrete-time controlled dynamical systems in the following forms
\begin{equation}\label{eq11}
x_{t+1} = f_t(x_t,u_t), \hskip2mm t \in \N,
\end{equation}
or
\begin{equation}\label{eq12}
x_{t+1} \leq f_t(x_t,u_t), \hskip2mm t \in \N.
\end{equation}
$X$ and $U$ are real Banach spaces, the state variable is $x_t \in X_t \subset X$, the control variable is $u_t \in U_t \subset U$ and $f_t : X_t \times U_t \rightarrow X_{t+1}$ is a mapping. For \eqref{eq12}, $X$ is endowed with a structure of ordered Banach space, and  its positive cone, $X_+ := \{ x \in X : x \geq 0 \}$, is closed, convex and satisfies $X_+ \cap (- X_+) = \{ 0 \}$. An initial state, $\sigma \in X_0$ is fixed. We define ${\rm Ead}(\sigma)$ the set of the processes $((x_t)_{t\in \N}, (u_t)_{t \in \N})$ which belong to $\prod_{t \in \N} X_t \times \prod_{t \in \N} U_t$, which satisfy $x_0 = \sigma$ and \eqref{eq11} for all $t \in \N$.  We define ${\rm Iad}(\sigma)$ the set of the processes $((x_t)_{t\in \N}, (u_t)_{t \in \N})$ which belong to $\prod_{t \in \N} X_t \times \prod_{t \in \N} U_t$, which satisfy $x_0 = \sigma$ and \eqref{eq12} for all $t \in \N$.
\vskip1mm
To define criteria, we consider functions $\phi_t : X_t \times U_t \rightarrow \R$ for all $t \in \N$. From these functions we build the functional
\begin{equation}\label{eq13}
J((x_t)_{t\in \N}, (u_t)_{t \in \N}) := \sum_{t = 0}^{+ \infty} \phi_t(x_t,u_t).
\end{equation}
Notice that $J$ is not defined for all the processes $((x_t)_{t\in \N}, (u_t)_{t \in \N}) \in \prod_{t \in \N}X_t \times \prod_{t \in \N} U_t$. 
And so we define the set of the processes $((x_t)_{t\in \N}, (u_t)_{t \in \N}) \in {\rm Ead}(\sigma)$ (respectively in ${\rm Iad}(\sigma)$) such that the series $\sum_{t = 0}^{+ \infty} \phi_t(x_t,u_t)$ converges into $\R$, and we denote this set by ${\rm Edom}(\sigma)$ (respectively ${\rm Idom}(\sigma)$).
\vskip3mm
We also consider other criteria to define the following problems.
\vskip1mm
\noindent
$({\bf PE}_1(\sigma))$ : Find $((\hat{x}_t)_{t\in \N}, (\hat{u}_t)_{t \in \N}) \in {\rm Edom}(\sigma)$ such that $J((\hat{x}_t)_{t\in \N}, (\hat{u}_t)_{t \in \N}) \geq J((x_t)_{t\in \N}, (u_t)_{t \in \N})$ for all $((x_t)_{t\in \N}, (u_t)_{t \in \N}) \in {\rm Edom}(\sigma)$.
\vskip1mm
\noindent
$({\bf PE}_2(\sigma))$ : Find $((\hat{x}_t)_{t\in \N}, (\hat{u}_t)_{t \in \N}) \in {\rm Ead}(\sigma)$ such that \\$\limsup_{T \rightarrow + \infty} \sum_{t=0}^T (\phi_t(\hat{x}_t,\hat{u}_t) - \phi_t(x_t,u_t)) \geq 0$ for all $((x_t)_{t\in \N}, (u_t)_{t \in \N}) \in {\rm Ead}(\sigma)$.
\vskip1mm
\noindent
$({\bf PE}_3(\sigma))$ : Find $((\hat{x}_t)_{t\in \N}, (\hat{u}_t)_{t \in \N}) \in {\rm Ead}(\sigma)$ such that \\$\liminf_{T \rightarrow + \infty} \sum_{t=0}^T (\phi_t(\hat{x}_t,\hat{u}_t) - \phi_t(x_t,u_t) ) \geq 0$ for all $((x_t)_{t\in \N}, (u_t)_{t \in \N}) \in {\rm Ead}(\sigma)$.
We also consider similar problems when the system is governed by \eqref{eq12} instead of \eqref{eq11}.
\vskip1mm
\noindent
$({\bf PI}_1(\sigma))$ : Find $((\hat{x}_t)_{t\in \N}, (\hat{u}_t)_{t \in \N}) \in {\rm Idom}(\sigma)$ such that \\
$J((\hat{x}_t)_{t\in \N}, (\hat{u}_t)_{t \in \N}) \geq J((x_t)_{t\in \N}, (u_t)_{t \in \N})$ for all $((x_t)_{t\in \N}, (u_t)_{t \in \N}) \in {\rm Idom}(\sigma)$.
\vskip1mm
\noindent
$({\bf PI}_2(\sigma))$ : Find $((\hat{x}_t)_{t\in \N}, (\hat{u}_t)_{t \in \N}) \in {\rm Iad}(\sigma)$ such that \\$\limsup_{T \rightarrow + \infty} \sum_{t=0}^T (\phi_t(\hat{x}_t,\hat{u}_t) - \phi_t(x_t,u_t)) \geq 0$ for all $((x_t)_{t\in \N}, (u_t)_{t \in \N}) \in {\rm Iad}(\sigma)$.
\vskip1mm
\noindent
$({\bf PI}_3(\sigma))$ : Find $((\hat{x}_t)_{t\in \N}, (\hat{u}_t)_{t \in \N}) \in {\rm Iad}(\sigma)$ such that \\$\liminf_{T \rightarrow + \infty} \sum_{t=0}^T (\phi_t(\hat{x}_t,\hat{u}_t) - \phi_t(x_t,u_t)) \geq 0$ for all $((x_t)_{t\in \N}, (u_t)_{t \in \N}) \in {\rm Iad}(\sigma)$.
\vskip1mm
\noindent
These problems are classical in the theory of infinite-horizon discrete-time Optimal Control, \cite{Mi}, \cite{JBNH}.
\vskip1mm
A way to establish necessary optimality conditions in the form of Pontryagin principles for the above-mentionned problems is the method of reduction to finite horizon which appears in \cite{JBHC}. In \cite{JBNH} several variations of this method are given in the setting of the finite dimension, and in \cite{BB} we find the use of this method in the setting of the infinite dimension. The basic idea of this method is that when $((\hat{x}_t)_{t\in \N}, (\hat{u}_t)_{t \in \N})$ is optimal for one of the previous problems, its restriction to $\{0,..., T \}$ is an optimal solution of a finite-horizon optimization problem. Using on these finite-horizon optimization problems a Karush-Kuhn-Tucker theorem or a Multipliers Rule, we obtain multipliers indexed by the finite horizon $T$. The second step is to build, from these multipliers sequences, multipliers which are suitable for the infinite-horizon problems.
\vskip2mm
When $X$ and $U$ have an infinite dimension, several difficulties arise, notably due to the closure of the ranges of linear operators, and due to the fact that in infinite dimensional dual Banach space, the origine is contained in the weak-star closure of  its unit sphere.
\vskip2mm
Now we briefly describe the contents of the paper. In Section 2, we give the statements of the main results which are Pontryagin principles. In Section 3, we establish results of Functional Analysis which are useful in the sequel. In Section 4, we establish results on Lagrange and Karush-Khun-Tucker multipliers. In Section 5, we give the proofs of the results of section 2. In Section 6, we give some additional applications of results that we use in the proof of the main theorems.
\section{The pontryagin principles.}
First we specify some notation.\vskip2mm
\noindent
We denote by $Int(A)$ the {\it topological interior} of a set $A$ and by $\overline{A}$ its {\it closure}. When $Z$ is a vector normed space, $z \in Z$ and $r \in (0, + \infty)$, $B_Z(z,r)$ denotes the {\it closed ball} with $z$ as center and $r$ as ray. The set co$(A)$ (respectively $\overline{\rm co}(A)$) stands for the {\it convex hull} (respectively the {\it closed convex hull}) of a subset $A$ in $Z$, Aff$(A)$ stands for the {\it affine hull} of a subset $A$ in $Z$. The {\it relative interior} of $A \subset Z$, denoted $\textnormal{ri}(A)$, is the topological interior of $A$ in the topological subspace Aff$(A)$.
\vskip2mm 
In the paper, the assumptions that we will use in our results belong to the following list of conditions. 
\begin{enumerate}
\item[(A1)] For all $t \in \N$, $X_t$ is a nonempty open subsets of $X$, and $U_t$ is a nonempty subsets of $U$.
\item[(A2)] $X$ is separable.
\end{enumerate}
And when $((\hat{x}_t)_{t\in \N}, (\hat{u}_t)_{t \in \N})$ is a given process and when $\sigma \in X_0$ is given, we consider the following conditions.
\begin{enumerate}
\item[(A3)] For all $t \in \N$, $\phi_t$ is Fr\'echet differentiable at $(\hat{x}_t,\hat{u}_t)$, $f_t$ is continuously Fr\'echet differentiable at $(\hat{x}_t,\hat{u}_t)$.
\item[(A4)] for all $t \in \N_*$, 
%$Df_t(\hat{x}_t, \hat{u}_t)(X\times T_{U_t}(\hat{u}_t))=X$ 
we have $0 \in Int\left[Df_t(\hat{x}_t, \hat{u}_t)((X\times T_{U_t}(\hat{u}_t))\cap B_{X\times U})\right]$, where $B_{X\times U}$ denotes the closed unit ball of $X\times U$.
\item[(A5)] For all $t \in \N$, the range of $D_2f_t(\hat{x}_t,\hat{u}_t)$ is closed and its codimension (in $X$) is finite.
\item[(A6)] There exists $s \in \N$ such that $A_s:=D_2f_s(\hat{x}_s,\hat{u}_s)(T_{U_s}(\hat{u}_s))$ contains a closed convex subset $K$ with $\textnormal{ri}(K)\neq\emptyset$ and such that $\overline{\textnormal{Aff}(K)}$ is of finite codimension in $X$.
\end{enumerate}
Recall that $T_{U_t}(\hat{u}_t) := \overline{ \{\alpha (u_t - \hat{u}_t) : \alpha \in [0, + \infty), u_t \in U_t \} }$.
We have not assume that the sets $U_t$ are open, but when we speak of the differentiability of a mapping $f$ on $X_t \times U_t$ at $(\hat{x}_t,\hat{u}_t)$, the meaning is that there exists a differentiable function $\tilde{f}$ defined on an open neighborhood of $(\hat{x}_t,\hat{u}_t)$ which is equal to $f$ on the intersection of this neighborhood and 
$X_t \times U_t$. When we speak of tangent cone, we consider the case where $U_t$ is convex. 
\begin{remark}\label{rem21}
Note that the condition \noindent (A6) is satisfied and is included in (A5), whenever there exists an $s\in \N$ such that $T_{U_s}(\hat{u}_s)=X$, in particular, if $\hat{u}_s$ belongs to the interior of $U_s$. 
\end{remark}
The first main result concerns the problems governed by \eqref{eq11}.
\vskip2mm
\begin{theorem}\label{thm22} Let $((\hat{x}_t)_{t \in \N}, (\hat{u}_t)_{t \in \N})$ be an optimal process of $({\bf PE}_k(\sigma))$ when $k \in \{1,2,3 \}$. Under [(A1)-(A6)], we assume moreover that $U_t$ is convex for all $t \in \N$. Then, there exist $\lambda_0 \in \R$ and $(p_t)_{t\geq 1} \in (X^*)^{\N_*}$ such that the following conditions hold. 
\begin{enumerate}
\item[(1)] $(\lambda_0,p_t)\neq (0,0)$, for all $t \geq s$.
\item[(2)] $\lambda_0 \geq 0$.
\item[(3)] $p_t = p_{t+1} \circ D_1f_t(\hat{x}_t, \hat{u}_t) + \lambda_0.D_1 \phi_t(\hat{x}_t, \hat{u}_t)$  for all $t \in \N_*$.
\item[(4)] $\langle \lambda_0.D_2\phi_t(\hat{x}_t, \hat{u}_t) + p_{t+1} \circ D_2f_t(\hat{x}_t, \hat{u}_t), u_t - \hat{u}_t \rangle \leq 0$ for all $t \in \N$ and for all $u_t \in U_t$.
\end{enumerate}
\end{theorem}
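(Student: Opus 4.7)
The plan is to use the reduction-to-finite-horizon method described in the introduction. Fix an integer $T>s$ and consider the auxiliary finite-horizon problem consisting in maximizing $\sum_{t=0}^{T-1}\phi_t(x_t,u_t)$ subject to $x_0=\sigma$, the dynamics $x_{t+1}=f_t(x_t,u_t)$ for $0\leq t\leq T-1$, the terminal condition $x_T=\hat{x}_T$, and the control constraints $u_t\in U_t$. First I would verify that the truncation of $((\hat{x}_t)_{t\in\N},(\hat{u}_t)_{t\in\N})$ to $\{0,\dots,T\}$ is optimal for this auxiliary problem: for $(\mathbf{PE}_1(\sigma))$ this is immediate from the absolute convergence of $J$, while for $(\mathbf{PE}_2(\sigma))$ and $(\mathbf{PE}_3(\sigma))$ one argues by contradiction, gluing any strictly better finite piece to the infinite tail $(\hat{x}_t,\hat{u}_t)_{t\geq T}$ to contradict the $\limsup$, respectively $\liminf$, optimality.

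Second, I would apply the multiplier rule from Section 4 to this Banach-space finite-horizon problem. Assumption (A4) supplies the Robinson-type qualification of the equality constraints (dynamics together with the terminal condition), and (A5) guarantees that the ranges of $D_2f_t(\hat{x}_t,\hat{u}_t)$ are closed and of finite codimension, so that nontrivial Lagrange multipliers exist. This produces $\lambda_0^T\geq 0$ and adjoints $(p_1^T,\dots,p_T^T)\in(X^*)^T$, not all zero, satisfying the discrete adjoint equation (3) and the variational inequality (4) on the horizon $\{0,\dots,T-1\}$.

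The main obstacle, and the heart of the argument, is the passage $T\to+\infty$ with preservation of nontriviality in the infinite-dimensional dual. I would normalize the multipliers by imposing $\lambda_0^T+\Vert p_{s+1}^T\Vert=1$; the delicate point is that this normalization is legitimate and does not collapse in the limit. This is where (A6) enters: the variational inequality at $t=s$ yields that the affine form $\lambda_0^T D_2\phi_s(\hat{x}_s,\hat{u}_s)+p_{s+1}^T\circ D_2f_s(\hat{x}_s,\hat{u}_s)$ is nonpositive on $T_{U_s}(\hat{u}_s)$, so $p_{s+1}^T$ is bounded above on the closed convex body $K\subset A_s$ by an affine functional in $\lambda_0^T$. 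Since $K$ has nonempty relative interior and $\overline{\textnormal{Aff}(K)}$ is of finite codimension in $X$, a relative-interior separation argument (this is the purpose of the functional-analytic lemmas of Section 3) supplies the uniform lower bound that rules out the degeneracy $(\lambda_0^T,p_{s+1}^T)\to 0$.

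Once the normalization is secured, separability of $X$ (A2) makes closed balls of $X^*$ weak-$\ast$ sequentially compact and metrizable, and a diagonal extraction over $T$ produces $\lambda_0\in[0,1]$ and $(p_t)_{t\geq 1}\subset X^*$ with $p_t^T\to p_t$ weakly-$\ast$ along a common subsequence for every $t\geq 1$. Passing to the weak-$\ast$ limit in the finite-horizon adjoint equation and variational inequality, using the continuity of $Df_t$ and of $D\phi_t$ provided by (A3), yields conclusions (3) and (4); conclusion (2) is preserved because $\lambda_0^T\geq 0$; and conclusion (1) for $t\geq s$ follows from the quantitative lower bound of the previous paragraph combined with the forward propagation of nontriviality implicit in (3), since if $\lambda_0=0$ and $p_{t+1}=0$, then (3) forces $p_t=0$, so any nontrivial index propagates to all larger indices.
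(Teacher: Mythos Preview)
Your outline follows the paper's proof essentially step for step: reduction to finite horizon (Lemma~\ref{lem41}), the multiplier rule of Section~4 (Lemma~\ref{lem45}), normalization at the index supplied by (A6), the Baire-type argument of Section~3 (Proposition~\ref{prop35}/\ref{prop39}) to prevent weak-$\ast$ collapse, diagonal extraction, and backward propagation of triviality via the adjoint equation.

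Two small corrections are in order. First, you misplace the role of (A4). In the paper it is \emph{not} used as a Robinson qualification for the finite-horizon multiplier rule; existence of the multipliers $(\lambda_0^T,p_1^T,\dots,p_{T+1}^T)$ comes from Theorem~\ref{thm44} once $\mathrm{Im}\,Dg^T(\hat{\bf x}^T,\hat{\bf u}^T)$ is closed, and that closedness is Lemma~\ref{lem43}, based on (A5). Assumption (A4) enters only in Lemma~\ref{lem47}, where it yields the forward bound $\|p_{t+1}^T\|\le r_t^{-1}(\|p_t^T\|+\lambda_0^T\|D\phi_t\|)$; combined with the backward bound from the adjoint equation, this gives constants $a_t,b_t$ with $\|p_t^T\|\le a_t\lambda_0^T+b_t\|p_s^T\|$ for all $T$. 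That uniform-in-$T$ boundedness of each $(p_t^T)_T$ is exactly what you need before invoking Banach--Alaoglu and the diagonal process, and you skipped over it.

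Second, a minor index point: with (A6) giving the convex body $K\subset D_2f_s(\hat x_s,\hat u_s)(T_{U_s}(\hat u_s))$, the variational inequality at time $s$ controls $p_{s+1}^T$ on $K$, so your choice to normalize at $s+1$ is natural; but then forward propagation of nontriviality only reaches $t\ge s+1$, not $t\ge s$ as stated. The paper normalizes at $p_s^T$ (using the shifted indexing of Lemma~\ref{lem47}(3)) and thereby gets $(\lambda_0,p_s)\neq(0,0)$ directly.
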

The second main result concerns the problems governed by \eqref{eq12}.
\begin{theorem}\label{thm23} Let $((\hat{x}_t)_{t \in \N}, (\hat{u}_t)_{t \in \N})$ be an optimal process of $({\bf PI}_k(\sigma))$ when $k \in \{1,2,3 \}$. Under [(A1)-(A6)], we assume moreover that $U_t$ is convex for all $t \in \N$ and that $Int(X_+) \neq \emptyset$. Then, there exist $\lambda_0 \in \R$ and $(p_t)_{t\geq 1} \in (X^*)^{\N_*}$ such that the following conditions hold. 
\begin{enumerate}
\item[(1)] $(\lambda_0,p_t)\neq (0,0)$, for all $t \geq s$.
\item[(2)] $\lambda_0 \geq 0$, and $p_n \geq 0$ for all $n \in \N_*$.
\item[(3)] $p_t = p_{t+1} \circ D_1f_t(\hat{x}_t, \hat{u}_t) + \lambda_0.D_1 \phi_t(\hat{x}_t, \hat{u}_t)$  for all $t \in \N_*$.
\item[(4)] $\langle \lambda_0.D_2\phi_t(\hat{x}_t, \hat{u}_t) + p_{t+1} \circ D_2f_t(\hat{x}_t, \hat{u}_t), u_t - \hat{u}_t \rangle \leq 0$ for all $t \in \N$ and for all $u_t \in U_t$.
\end{enumerate}
\end{theorem}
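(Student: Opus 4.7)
I would adapt the reduction-to-finite-horizon scheme sketched in the introduction and used for the equality case in Theorem \ref{thm22}, with the additional care required by the cone-valued constraint $x_{t+1}-f_t(x_t,u_t)\in -X_+$.

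\textbf{Truncation.} For every integer $T\ge s+1$ I would introduce the finite-horizon problem: maximize $\sum_{t=0}^T\phi_t(x_t,u_t)$ subject to $x_0=\sigma$, $x_t\in X_t$, $u_t\in U_t$, $x_{t+1}-f_t(x_t,u_t)\in -X_+$ for $0\le t\le T-1$, together with the tail-matching constraint $\hat x_{T+1}-f_T(x_T,u_T)\in -X_+$. The role of the latter is that any admissible finite-horizon process can then be spliced with the tail $(\hat x_t,\hat u_t)_{t>T}$ of the optimal process to produce an element of ${\rm Iad}(\sigma)$, so that the restriction of $(\hat x_t,\hat u_t)$ to $\{0,\dots,T\}$ is optimal for the finite-horizon problem (in the appropriate sense for each $k\in\{1,2,3\}$). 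The assumption $Int(X_+)\neq\emptyset$ provides the Slater-type slack that will be needed when applying the cone-constrained multiplier rule.

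\textbf{Finite-horizon multipliers and normalization.} Apply the Karush--Kuhn--Tucker result of Section~4; here (A3)--(A5) furnish the linearization and closed-range/finite-codimension constraint qualification, and $Int(X_+)\neq\emptyset$ yields positive dual multipliers. One obtains $\lambda_0^T\ge 0$ and $p_t^T\in X^*_+$ for $1\le t\le T+1$ satisfying the adjoint equation~(3) on $\{1,\ldots,T\}$, the maximum principle~(4) on $\{0,\dots,T\}$, and the nontriviality $(\lambda_0^T,p_{s+1}^T)\neq 0$. Now invoke (A6): set $L:=\overline{{\rm Aff}(K)}$, a closed subspace of finite codimension, and pick a finite-dimensional complement $F$ with $X=L\oplus F$. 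Dually $X^*=F^\perp\oplus L^\perp$ with $L^\perp$ finite-dimensional, and split $p_{s+1}^T=r_T+q_T$ accordingly. Inserting directions $v\in T_{U_s}(\hat u_s)$ such that $D_2f_s(\hat x_s,\hat u_s)v$ varies over $\textnormal{ri}(K)\subset L$ into the maximum principle at $t=s$ yields an a priori bound $\|r_T\|\le C\lambda_0^T$, with $C$ independent of $T$. Renormalize the whole family of multipliers by dividing by $\lambda_0^T+\|q_T\|$, which is strictly positive: if it vanished then $\lambda_0^T=0$ and $p_{s+1}^T=q_T+r_T=0$, violating the finite-horizon nontriviality.

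\textbf{Weak-star limit and conclusion.} By (A2) closed balls of $X^*$ are weak-star metrizable and sequentially compact. After renormalization $\lambda_0^T\in[0,1]$, $q_T$ lies in a bounded subset of the finite-dimensional space $L^\perp$, and $r_T$ is norm-bounded. A diagonal extraction produces $T_n\to\infty$ with $\lambda_0^{T_n}\to\lambda_0$, $q_{T_n}\to q$ in norm, and $p_t^{T_n}\to p_t$ in the weak-star topology for each $t\ge 1$. Boundedness of the derivatives $D_1f_t(\hat x_t,\hat u_t)$ passes the adjoint equation~(3) to the limit, and weak-star continuity passes~(4). Positivity survives because $X^*_+$ is weak-star closed (as $X_+$ is norm-closed), giving $p_n\ge 0$. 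For nontriviality, the normalization forces $\lambda_0+\|q\|=1$; if $\lambda_0=0$ then $q\neq 0$ in $L^\perp$, so $p_{s+1}=q+r\neq 0$ because $L^\perp\cap F^\perp=\{0\}$. When $\lambda_0=0$ the adjoint equation reduces to $p_t=p_{t+1}\circ D_1f_t$, whose contrapositive propagates $p_{s+1}\neq 0$ forward to $p_t\neq 0$ for every $t\ge s+1$, and an analogous argument from the maximum principle at $t=s$ handles the endpoint. The hard part is the nontriviality step: in infinite dimension $0$ lies in the weak-star closure of the unit sphere of $X^*$, so a naive normalization $\|(\lambda_0^T,p_{s+1}^T)\|=1$ would generically collapse in the limit; (A6) is precisely the device that repairs this by isolating a finite-dimensional component of $p_{s+1}^T$ on which weak-star convergence is norm convergence, while the maximum principle controls the complementary infinite-dimensional component by $\lambda_0^T$.
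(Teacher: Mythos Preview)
Your reduction scheme and the splicing argument are fine, and the passage to the limit (diagonal extraction, weak-star closedness of $X^*_+$, propagation of nontriviality via the adjoint equation) is standard and correct. The gap is in your normalization step, specifically the sentence
\[
\text{``\dots yields an a priori bound }\|r_T\|\le C\lambda_0^T,\text{ with }C\text{ independent of }T.\text{''}
\]
This is exactly the hard part, and it is not a consequence of the maximum principle as you suggest. What the inequality at $t=s$ gives you is: for each $z\in K$ there exists a preimage $v_z\in T_{U_s}(\hat u_s)$ with $D_2f_s(\hat x_s,\hat u_s)v_z=z$, and then
\[
p_{s+1}^T(z)\le \lambda_0^T\cdot\bigl|D_2\phi_s(\hat x_s,\hat u_s)v_z\bigr|=:C_z\,\lambda_0^T.
\]
Nothing in (A6) guarantees that the selection $z\mapsto v_z$ can be made with $\|v_z\|$ bounded on $\textnormal{ri}(K)$ (or on any ball in $\textnormal{Aff}(K)$): $D_2f_s$ need not be open onto $K$, and (A4) only controls the \emph{full} differential $Df_t$, which brings $\|p_t^T\|$ back into the estimate (cf.\ inequalities (4.9)--(4.10)). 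So you only have a pointwise one-sided bound $p_{s+1}^T(z)\le C_z\lambda_0^T$ with $C_z$ possibly unbounded in $z$, and from this alone you cannot conclude $\|r_T\|\le C\lambda_0^T$.

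This is precisely the obstruction the paper isolates. The paper's device is Lemma~\ref{lem33}: a Baire category argument showing that a family of pointwise one-sided bounds $p_n(z)\le C_z\lambda_n$ on a closed convex set $K$ with $\textnormal{ri}(K)\neq\emptyset$ upgrades to a uniform estimate on bounded subsets of $\overline{\textnormal{Aff}(K)}$, at the cost of an extra term $p_n(b_a-a)$ evaluated at a single point. The paper then normalizes by $\lambda_0^T+\|p_s^T\|=1$ and uses Proposition~\ref{prop35}/\ref{prop39}: if the limit were trivial, the Baire estimate would force $\|p_s^{T_k}\|\to 0$, contradicting the normalization. Your decomposition $p_{s+1}^T=r_T+q_T$ along $X^*=F^\perp\oplus L^\perp$ is a perfectly reasonable packaging of the same idea, and once you insert the Baire step to justify the bound on $r_T$ (or, equivalently, on $(p_{s+1}^T)_{|L}$), the rest of your argument goes through.
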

The proofs of Theorem \ref{thm22} and Theorem \ref{thm23} are based on the following two ideas: the first one is the reduction to finite horizon given by Lemma \ref{lem41} and the second one is to find criteria ensuring that the multipliers are not trivial in the infinite horizon. This criteria will be given by Lemma \ref{lem33}. 
%
%%%%%%%%%%%%%%%%%%%%%%%%%%%%%%%%%%%%%%%%%%%%%%%%%%%%%%%%%%%%%
\section{Preliminary results of Functional Analysis}
It is well known from Josefson-Nissenzweig theorem, (see [\cite{Dies}, Chapter XII]) that in infinite dimensional Banach space $Z$, there always exists a sequence $(p_n)_n$ in the dual space $Z^*$ that is weak$^*$ null and $\inf_{n \in \N} \|p_n\| >0$. In this section, we look about reasonable and usable conditions on a sequence of norm one in $Z^*$ such that this sequence does not converge to the origin in the $w^*$-topology. This situation has the interest, when we are looking for nontrivial multipliers for optimization problems, and was encountered several times in the literature. See for example \cite{BB} and \cite{MM}. The key is Lemma \ref{lem33} which permits to provide a solution to this problem.
We split this section in two subsections. The first is devoted to establish an abstract result (Lemma \ref{lem33}) which permits to avoid the Josefson-Nissenzweig phenomenon. The second is devoted to the consequences of this abstract result which are useful for our optimal control problem. 
\vskip2mm
 We need the following classical result.
\begin{proposition} \label{prop31} Let $C$ be a convex subset of a normed vector space. Let $x_0\in Int(C)$ and $x_1\in \overline{C}$. Then, for all $\alpha\in (0,1]$, we have $\alpha x_0+(1-\alpha)x_1\in Int(C)$. 
\end{proposition}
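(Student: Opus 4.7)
The plan is to prove this classical fact directly from the definitions of interior and closure, using the convexity of $C$. When $\alpha = 1$ the conclusion is immediate since $z := \alpha x_0 + (1-\alpha)x_1 = x_0 \in Int(C)$, so I may assume $\alpha \in (0,1)$.

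First I would pick a radius $r > 0$ such that $B(x_0, r) \subset C$, which is possible because $x_0 \in Int(C)$. Then I would choose a candidate radius around $z$, say $\rho := \alpha r / 2$, and aim to show $B(z, \rho) \subset C$. Given an arbitrary $w \in B(z,\rho)$, the strategy is to exhibit $w$ as a convex combination of a point inside $B(x_0, r)$ and a point inside $C$: since $x_1 \in \overline{C}$, for every $\varepsilon > 0$ there exists $x_1' \in C$ with $\|x_1' - x_1\| < \varepsilon$, and I would pick $\varepsilon$ small enough (for instance $\varepsilon := \alpha r / (2(1-\alpha))$) so that the computation below closes.

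The key algebraic step is to \emph{define} $x_0'$ by the equation $w = \alpha x_0' + (1-\alpha) x_1'$, i.e.\ $x_0' := \frac{1}{\alpha}\bigl(w - (1-\alpha)x_1'\bigr)$. A direct computation gives $x_0' - x_0 = \frac{1}{\alpha}\bigl((w - z) + (1-\alpha)(x_1 - x_1')\bigr)$, so by the triangle inequality $\|x_0' - x_0\| \leq \frac{1}{\alpha}(\rho + (1-\alpha)\varepsilon) < r$ with the choices above. Hence $x_0' \in B(x_0, r) \subset C$, and then convexity of $C$ yields $w = \alpha x_0' + (1-\alpha) x_1' \in C$. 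Since $w$ was arbitrary in $B(z,\rho)$, we conclude $z \in Int(C)$.

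There is no genuine obstacle here; the only care required is the bookkeeping on the constants $\rho$ and $\varepsilon$ so that the estimate $\rho + (1-\alpha)\varepsilon < \alpha r$ holds strictly. The proof does not require $C$ to be closed or the ambient space to be complete; only the convexity of $C$, the normed-space structure, and the basic $\varepsilon$-characterizations of $Int(C)$ and $\overline{C}$ are used.
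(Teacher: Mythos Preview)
Your argument is correct and complete: with $\rho=\alpha r/2$ and $\varepsilon=\alpha r/(2(1-\alpha))$ one indeed gets $\|x_0'-x_0\|<\frac{1}{\alpha}(\rho+(1-\alpha)\varepsilon)=r$, so $x_0'\in B(x_0,r)\subset C$ and $w=\alpha x_0'+(1-\alpha)x_1'\in C$ by convexity. There is nothing to compare against, since the paper does not prove Proposition~\ref{prop31} at all; it simply records it as a classical fact and moves on to Proposition~\ref{prop32}.
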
 
We deduce the following useful proposition.
\begin{proposition} \label{prop32} Let $(F,\|.\|_F)$ be a normed vector space and $C$ be a closed convex subset of $F$ with non empty interior. Suppose that $D\subset C$ is a closed subset of $C$ with no empty interior in $(C,\|.\|_F)$ (for the topology induced by $C$). Then, the interior of $D$ is non empty in $(F,\|.\|_F)$. 
\end{proposition}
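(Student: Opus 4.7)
The strategy is to exploit Proposition~\ref{prop31} to slightly perturb a point of the $C$-interior of $D$ into the $F$-interior of $C$, while staying inside $D$. Once we have a point that lies simultaneously in $D$ and in $Int_F(C)$, a sufficiently small $F$-ball around it will be contained in $C$, and if we shrink that ball further it will remain inside the original $C$-ball witnessing membership in $D$, giving a genuine $F$-interior point of $D$.

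Concretely, I would start by using the assumption that $D$ has nonempty interior in the relative topology of $C$: pick $x_0 \in D$ and $r>0$ such that $B_F(x_0,r)\cap C \subset D$. Next, pick any $y_0 \in Int_F(C)$, which exists by hypothesis. For $\alpha \in (0,1]$ set $x_\alpha := \alpha y_0 + (1-\alpha) x_0$. Since $x_0 \in C \subset \overline{C}$ and $y_0 \in Int_F(C)$, Proposition~\ref{prop31} yields $x_\alpha \in Int_F(C)$ for every such $\alpha$. Now choose $\alpha$ small enough that $\alpha \|y_0 - x_0\|_F < r$; then $x_\alpha \in B_F(x_0,r)$, and since also $x_\alpha \in C$, we conclude $x_\alpha \in B_F(x_0,r)\cap C \subset D$.

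Finally, because $x_\alpha \in Int_F(C)$, there exists $\rho_1 > 0$ with $B_F(x_\alpha, \rho_1)\subset C$. Setting $\rho := \min\bigl(\rho_1,\, r - \alpha \|y_0-x_0\|_F\bigr) > 0$, every point of $B_F(x_\alpha,\rho)$ lies in $B_F(x_0,r)\cap C$ and therefore in $D$. Hence $x_\alpha \in Int_F(D)$, which is what we wanted. The only conceptual step is spotting that the $C$-interior assumption alone does not suffice — one must relocate the candidate point into the $F$-interior of $C$ — and Proposition~\ref{prop31} supplies exactly this relocation; note that the hypothesis that $D$ is closed in $C$ is not actually used in the argument.
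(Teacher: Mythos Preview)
Your proof is correct and follows essentially the same approach as the paper: pick a relative-interior point of $D$, use Proposition~\ref{prop31} to slide it along the segment toward a point of $Int_F(C)$ so that it lands in $Int_F(C)$ while remaining inside the original $C$-ball, and then intersect the two balls. The only cosmetic difference is the labeling (the paper calls the interior point of $C$ by $x_0$ and the $D$-point by $x_1$), and your remark that the closedness of $D$ plays no role is also accurate.
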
 
\begin{proof} On one hand, there exists $x_0$ such that $x_0\in Int(C)$. On the other hand, since $D$ has no empty interior in $(C,\|.\|_{F})$, there exists $x_1\in D$ and $\epsilon_1>0$ such that $(B_F(x_1,\epsilon_1)\cap C) \subset D$. Using Proposition \ref{prop31}, we obtain that for all $\alpha\in (0,1]$, we have $\alpha x_0+(1-\alpha)x_1\in Int(C)$. Since $\alpha x_0+(1-\alpha)x_1\rightarrow x_1$ when $\alpha\rightarrow 0$, there exist some small $\alpha_0$ and an integer number $N\in \N_*$ such that $B_F(\alpha_0 x_0+(1-\alpha_0)x_1,\frac{1}{N})\subset (B_F(x_1,\epsilon_1)\cap C) \subset D$. Thus $D$ has a non empty interior in $F$.
\end{proof}
%%%%%%%%%%%%%%%%%%%%%%%%%%%%%%%%%%%%%%%%%%%%%%%%%%%%%%%%%%%%%%%
\subsection{A key lemma.} 
%%%%%%%%%%%%%%%%%%%%%%%%%%%%%%%%%%%%%%%%%%%%%%%%%%%%%%%%%%%%%
A map $p$ from a vector space $Z$ into $\R$ is said to be {\it subadditive} if and only if, for all $x, y\in Z$ one has
$$p(x+y)\leq p(x) + p(y).$$
A map $p$ is said to be {\it sublinear} if it is subadditive and satisfies $p(\lambda z)=\lambda p(z)$ for all $\lambda \geq 0$ and all $z\in Z$.
\vskip2mm
We give now our principal lemma. This lemma is based on the Baire category theorem. 
\begin{lemma}\label{lem33} Let $Z$ be a Banach space. Let $K$ be a non empty closed convex subset of $Z$ and suppose that $\textnormal{ri}(K)\neq \emptyset$. Let $\mathcal{T}$ be any nonempty set and $(p_n)_{n\in \mathcal{T}}$ be a collection of subadditive and lower semicontinuous functions on $Z$ and let $(\lambda_n)_{n\in \mathcal{T}}$ be a collection of nonnegative real number. Suppose that, for all $z\in K$, there exists $C_z \in \R$ such that, for all $n\in \mathcal{T}$, $p_n(z)\leq C_z \lambda_n$.\\
Then, for all $a \in K$, there exists $b_a \in \textnormal{Aff} (K)$ such that for all bounded subset $B$ of $\overline{\textnormal{Aff} (K)}$ there exists $R_{B}\geq 0$ such that  
$$\forall n \in \mathcal{T}, \hspace{4mm} \sup_{h\in B} p_n(h-a) \leq  R_{B} \cdot (\lambda_n + p_n(b_a -a)).$$
\end{lemma}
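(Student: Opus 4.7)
The plan is to cover $K$ by closed sets on which the $p_n$ are uniformly bounded, apply the Baire category theorem, and then use Proposition \ref{prop32} to transfer the interior from $K$ to $F := \overline{\textnormal{Aff}(K)}$. Concretely, for each $k \in \N$ set
$$K_k := \{z \in K : p_n(z) \leq k \lambda_n \text{ for all } n \in \mathcal{T}\}.$$
Lower semicontinuity of each $p_n$ makes every $K_k$ closed in $K$, and the hypothesis $p_n(z) \leq C_z \lambda_n$ yields $K = \bigcup_{k \in \N} K_k$. Since $K$ is closed in the Banach space $Z$, it is complete and hence a Baire space, so some $K_{k_0}$ (I may take $k_0 \geq 1$) has non-empty interior in $K$. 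Independently, $\textnormal{ri}(K) \neq \emptyset$ combined with density of $\textnormal{Aff}(K)$ in $F$ and closedness of $K$ shows that $K$ itself has non-empty interior in $F$: any $c_0 \in \textnormal{ri}(K)$ with $B_Z(c_0,\eta) \cap \textnormal{Aff}(K) \subset K$ gives, by approximation and closedness, $B_Z(c_0,\eta) \cap F \subset K$. Proposition \ref{prop32} applied inside the Banach space $F$ with $C = K$ and $D = K_{k_0}$ then produces $b_1 \in K_{k_0}$ and $\rho > 0$ such that $B_Z(b_1, \rho) \cap F \subset K_{k_0}$; equivalently, $p_n(z) \leq k_0 \lambda_n$ uniformly in $n$ for every $z \in F$ with $\|z - b_1\| \leq \rho$.

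I set $b_a := 2a - b_1$: as the affine combination $2 \cdot a + (-1)\cdot b_1$ of $a, b_1 \in K$ with coefficients summing to $1$, it lies in $\textnormal{Aff}(K)$, and $b_a - a = a - b_1 \in V_0 := \textnormal{Aff}(K) - a$. Given a bounded $B \subset F$, let $M_B := \sup_{h \in B}\|h - a\|$ and pick an integer $m \geq M_B/\rho$. For $h \in B$ and $y := h - a$, the vector $b_1 + y/m$ belongs to $F$ (write it as $(1-1/m) b_1 + (1/m)(b_1 + h - a)$, an affine combination of elements of $F$) and has $\|b_1 + y/m - b_1\| \leq \rho$; hence $b_1 + y/m \in K_{k_0}$ and $p_n(b_1 + y/m) \leq k_0 \lambda_n$. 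The identity $y = m(b_1 + y/m) + m(-b_1)$, combined with subadditivity and the integer-scaling bound $p_n(mu) \leq m\, p_n(u)$, gives
$$p_n(h - a) \leq m k_0 \lambda_n + m\, p_n(-b_1),$$
and the decomposition $-b_1 = (a - b_1) + (-a) = (b_a - a) + (-a)$ further yields $p_n(-b_1) \leq p_n(b_a - a) + p_n(-a)$. A second application of subadditivity using the ball (with a convex-combination argument that expresses $-a$ from points of the form $(1-1/m)b_1 + (1/m)a \in K_{k_0}$) absorbs the $p_n(-a)$ term into a universal multiple of $\lambda_n + p_n(b_a - a)$, and choosing $R_B$ of the order $m k_0$ finishes the estimate.

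The main obstacle is the final absorption of $p_n(-a)$: the hypothesis provides no direct control on $p_n$ at negative arguments, so the required bound must be extracted by iterating subadditivity while keeping auxiliary vectors inside $V_0$, and exploiting the ball $B_Z(b_1,\rho) \cap F \subset K_{k_0}$ a second time. The specific coefficient $2$ in the choice $b_a = 2a - b_1$ is precisely what ensures $b_a - a \in V_0$, which is the algebraic key making the iteration close uniformly in $n \in \mathcal{T}$.
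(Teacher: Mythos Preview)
Your Baire setup and use of Proposition~\ref{prop32} match the paper's argument, and you correctly arrive at the bound $p_n(h-a)\le mk_0\lambda_n+m\,p_n(-b_1)$. The genuine gap is the final step. With your choice $b_a=2a-b_1$ you are forced to control $p_n(-a)$, and the promised ``convex-combination argument'' cannot close: any attempt to express $-a$ through points of the ball $B_Z(b_1,\rho)\cap F$ (for instance via $c'_m:=(1+1/m)b_1-(1/m)a\in K_{k_0}$, which gives $-a=mc'_m-(m+1)b_1$) reintroduces a term $p_n(-b_1)$ on the right, so the estimate is circular. More fundamentally, subadditivity yields no control on $p_n$ at negatives of given points, and $-a$ need not even lie in $\overline{\textnormal{Aff}(K)}$; the only information you have is the ball estimate, and that always leaves one uncontrolled ``negative'' residue. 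Your remark that the coefficient $2$ ``is precisely what ensures $b_a-a\in V_0$'' is also off: every $b_a\in\textnormal{Aff}(K)$ satisfies $b_a-a\in V_0$, so this is not a distinguishing feature of your choice.

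The paper sidesteps the whole issue by \emph{not} splitting $-b_1$. It applies integer scaling once more, $p_n(-b_1)\le k_0\,p_n(-b_1/k_0)$, and then simply \emph{defines} $b_a$ through $b_a-a:=-b_1/k_0$, obtaining immediately
\[
p_n(h-a)\le mk_0\bigl(\lambda_n+p_n(b_a-a)\bigr),\qquad R_B=mk_0.
\]
No separate bound on $p_n(-a)$ is ever needed. A minor additional point: Proposition~\ref{prop32} is stated for a normed \emph{vector} space, so strictly you should apply it inside the translated subspace $\overline{\textnormal{Aff}(K)}-a$ rather than inside the affine set $\overline{\textnormal{Aff}(K)}$; the paper makes this translation explicit from the start.
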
 
%%%%%%%%%%%%%%%
\begin{proof} 
For each $m\in \N$, we set $F_m:=\left\{z\in K: \forall n \in \mathcal{T}, \;  p_n(z)\leq m \lambda_n \right\}$. 
The sets $F_m$ are closed subsets of $K$. Notice that 
$$F_m=K\cap \left(\bigcap_{n\in \mathcal{T}} p_n^{-1}(]-\infty, m \lambda_n])\right),$$
where, for each $n\in \mathcal{T}$, $p_n^{-1}(]-\infty, m \lambda_n])$ is a closed subset of $Z$ by the semicontinuity of $p_n$. On the other hand, we have $K=\bigcup_{m\in \N} F_m$. The inclusion $\supset$ is trivial. We prove the inclusion $\subset$. For each $z\in K$, there exists $C_z\in \R$ such that $p_n(z)\leq C_z \lambda_n$ for all $n\in \mathcal{T}$. If $C_z\leq 0$, we have that $z\in F_0$. If $C_z>0$, we put $m_1:= [C_z]+1$ where $[C_z]$ denotes the integer part of $C_z$, then we have that $z\in F_{m_1}$. We deduce then that for all $m\in \N$, the sets $F_m-a$ are closed subset of $K-a$ and that $K-a=\bigcup_{m\in \N} \left(F_m-a\right)$. Using the Baire category Theorem on the complete metric space $K-a$, we get an $m_0\in \N$ such that $F_{m_0}-a$ has a nonempty interior in $K-a$. Since by hypothesis $K-a$ has a nonempty interior in the normed vector subspace $F:=\textnormal{Aff} (K)-a$ of $Z$, then by using Proposition \ref{prop32} we obtain that $F_{m_0}-a$ has a nonempty interior in $F$. So there exists $z_0\in F_{m_0}-a$ and some integer number $N\in \N_*$ such that $B_{F}(z_0,\frac{1}{N}):= ( F\cap B_Z(z_0,\frac{1}{N})) \subset F_{m_0}-a$. In other words, for all $z \in B_F(b, \frac{1}{N})\subset F_{m_0}$ where $b:=a+z_0\in F_{m_0}\subset F$ and all $n\in \mathcal{T}$, we have:
\begin{equation} \label{eq31}
p_n(z)\leq m_0 \lambda_n.
\end{equation}
Now, let $B$ be a nonempty bounded subset of $F$, there exists an integer number $N_B\in \N_*$ such that $B\subset B_F(0,N_B)$. On the other hand, for all $h\in B$, there exists $z_h\in B_F(b,\frac{1}{N})$ such that $h=N_B N \cdot(z_h-b)$ (it sufficies to see that $z_h:= b+\frac{h}{N_B.N}\in B_F(b,\frac{1}{N})$). So using (\ref{eq31}) and the subadditivity of $p_n$, we obtain that, for all $n\in \mathcal{T}$:
\begin{eqnarray}  
p_n(h)&=& p_n(N_B N \cdot (z_h-b)) \nonumber\\
                    &\leq& N_BN \cdot p_n(z_h-b)\nonumber\\
                                    &\leq& N_BN \cdot (p_n(z_h)+p_n(-b))\nonumber\\
                                    &\leq& N_BN m_0 \lambda_n+ N_BN \cdot p_n(-b)\nonumber\\
                                    &\leq&  N_BN m_0 \lambda_n+ N_B N m_0 \cdot p_n(\frac{-b}{m_0})\nonumber\\
                                    &=&  N_BN m_0\left( \lambda_n+p_n(\frac{-b}{m_0})\right)\nonumber.
\end{eqnarray}
Setting $R_{B}:= N_B N m_0$ and $b_0:= \frac{-b}{m_0}\in F$ and by taking the supremum on $B$, we obtain for all $n \in \mathcal{T}$,
\begin{equation}\label{eq32}
\sup_{h\in B} p_n(h)\leq R_{B} \cdot ( \lambda_n+ p_n(b_0)).
\end{equation}
Now, let $\tilde{B}$ be any bounded subset of the closure $\overline{F}$ of $F$. There exists a bounded subset of $F$, $B$, such that $\tilde{B}=\overline{B}$. Hence for each $z\in \tilde{B}$, there exists a sequence $(h_k)_k$ in $B$ such that $h_k \rightarrow z$ when $k \rightarrow + \infty$. Thus, using the lower semicontinuity of $p_n$ for all $n\in \N$ and the inequality (\ref{eq32}), we obtain
$$p_n(z) \leq \liminf_{k\rightarrow +\infty} p_n(h_k) \leq \sup_{h\in B} p_n(h) \leq R_{B} \cdot( \lambda_n+ p_n(b_0)),$$
and by taking the supremum on $\tilde{B}$, we obtain, for all $n \in \mathcal{T}$,
$$\sup_{z\in \tilde{B}} p_n(z) \leq R_{B} \cdot ( \lambda_n+ p_n(b_0)).$$
Since $\overline{F}= \overline{\textnormal{Aff}(K)}-a$, by changing the bounded subsets $\tilde{B}$ of $\overline{F}$ by $B-a$, where $B$ is a bounded subset of $\overline{\textnormal{Aff}(K)}$ and by setting $b_a:= b_0 +a \in \textnormal{Aff}(K)$, we conclude the proof.
\end{proof}

We obtain the following corollary, which may be of interest in some cases.
\begin{corollary} \label{cor34} Let $Z$ be a Banach space and let $A$ be a non empty subset of $Z$. Let $\mathcal{T}$ be any nonempty set and $(p_n)_{n\in \mathcal{T}}$ be a collection of sublinear and lower semicontinuous functions on $Z$ and let $(\lambda_n)_{n \in \mathcal{T}}$ be a collection of nonegative real number. Let $C: Z\longrightarrow \R$ be a upper semicontinuous function. Suppose that
\begin{equation} \label{eq33}
 \forall n\in \mathcal{T},  \forall z\in A, \hspace{3mm} p_n(z)\leq C(z) \lambda_n. 
\end{equation} 
If $\textnormal{ri}(\overline{\textnormal{co}}(A)) \neq \emptyset$, then, for all $a\in K$, there exists $b_a\in \textnormal{Aff} (\overline{\textnormal{co}}(A))$ such that for all bounded subset $B$ of $\overline{\textnormal{Aff} (\overline{\textnormal{co}}(A))}$ there exists $R_{B}\geq 0$ such that  
$$\forall n\in \mathcal{T}, \hspace{4mm} \sup_{h\in B} p_n(h-a) \leq R_{B} \cdot (\lambda_n + p_n(b_a -a)).$$
\end{corollary}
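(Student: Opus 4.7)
The strategy is to apply Lemma \ref{lem33} with $K := \overline{\textnormal{co}}(A)$. Then $K$ is a nonempty closed convex subset of the Banach space $Z$ with $\textnormal{ri}(K) \neq \emptyset$ by hypothesis, each $p_n$ is sublinear (hence subadditive) and lower semicontinuous, and the $\lambda_n$ are nonnegative, so all the structural assumptions of Lemma \ref{lem33} are in place. It remains to verify the pointwise domination hypothesis: for each $z \in K$ there exists a real number $C_z$ with $p_n(z) \leq C_z \lambda_n$ for every $n \in \mathcal{T}$. Once this is established, Lemma \ref{lem33} applied to $K$ immediately yields the desired conclusion, since $\textnormal{Aff}(K) = \textnormal{Aff}(\overline{\textnormal{co}}(A))$.

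I would verify the pointwise domination in three stages. For $z \in A$, set $C_z := C(z)$; hypothesis \eqref{eq33} does the job. For $z \in \textnormal{co}(A)$, pick any representation $z = \sum_{i=1}^m \alpha_i z_i$ with $z_i \in A$, $\alpha_i \geq 0$, $\sum_i \alpha_i = 1$; the sublinearity of each $p_n$ then yields
$$p_n(z) \leq \sum_{i=1}^m \alpha_i p_n(z_i) \leq \left( \sum_{i=1}^m \alpha_i C(z_i) \right) \lambda_n,$$
so $C_z := \sum_i \alpha_i C(z_i)$ works. Note that these first two stages use neither the lower semicontinuity of $p_n$ nor the upper semicontinuity of $C$.

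The main obstacle is the third stage, the passage from $\textnormal{co}(A)$ to its closure. For $z \in K \setminus \textnormal{co}(A)$, I would take an approximating sequence $z_k \in \textnormal{co}(A)$ with $z_k \to z$; lower semicontinuity of $p_n$ yields $p_n(z) \leq \liminf_k p_n(z_k) \leq \lambda_n \liminf_k C_{z_k}$, so the real work is to show that the constants $(C_{z_k})_k$ can be kept bounded along a carefully chosen approximating sequence. This is exactly where the upper semicontinuity of the auxiliary function $C$ enters the argument: from $C(z) \in \R$ and the u.s.c. of $C$ at $z$, there is an open neighborhood of $z$ on which $C$ is bounded above, and one uses this to control the sum $\sum_i \alpha_i^{(k)} C(z_i^{(k)})$ uniformly in $k$ along a suitable approximation. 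Setting $C_z := \liminf_k C_{z_k}$ then completes the verification of the pointwise domination on $K$, and Lemma \ref{lem33} delivers the conclusion of Corollary \ref{cor34}.
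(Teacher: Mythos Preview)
Your strategy --- set $K := \overline{\textnormal{co}}(A)$ and verify the pointwise domination hypothesis of Lemma~\ref{lem33} --- is exactly the paper's, and your first two stages (on $A$ and on $\textnormal{co}(A)$) are correct.

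The third stage has a real gap. Upper semicontinuity of $C$ at $z$ bounds $C$ on a neighborhood $V$ of $z$, but the vertices $z_i^{(k)} \in A$ in a representation $z_k = \sum_i \alpha_i^{(k)} z_i^{(k)}$ need not lie in $V$; only the barycenter $z_k$ does. Hence the local bound on $C$ gives no control over $\sum_i \alpha_i^{(k)} C(z_i^{(k)})$. Concretely, take $Z = \R^2$, $A = \{(0,0)\} \cup \{(m,1) : m \in \N_*\}$, and let $C$ equal $m^2$ at $(m,1)$ and $0$ elsewhere (u.s.c.\ since the points of $A$ are isolated). Then $(1,0) \in \overline{\textnormal{co}}(A) \setminus \textnormal{co}(A)$, and for \emph{any} approximant $(x_k,y_k) \in \textnormal{co}(A)$ tending to $(1,0)$ and \emph{any} representation of it, the constraints $\sum_m \alpha_m^{(k)} = y_k$ and $\sum_m m\,\alpha_m^{(k)} = x_k$ force, by Cauchy--Schwarz, $C_{z_k} = \sum_m m^2 \alpha_m^{(k)} \ge x_k^2/y_k \to +\infty$; so no ``carefully chosen approximating sequence'' exists. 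Worse, with the linear functionals $p_n(x,y) = 2nx - n^2 y$ and $\lambda_n = 1$ one has $p_n(m,1) = 2nm - n^2 = m^2 - (m-n)^2 \le m^2 = C(m,1)\lambda_n$ on $A$, yet $p_n(1,0) = 2n$ is unbounded in $n$: no finite $C_{(1,0)}$ exists at all, and the very hypothesis of Lemma~\ref{lem33} fails on $K = \overline{\textnormal{co}}(A)$. The paper's own proof is a single sentence asserting that $p_n(z) \le C(z)\lambda_n$ extends to $\overline{\textnormal{co}}(A)$ and does not isolate this closure step either, so you are not missing an argument that is spelled out there.
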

\begin{proof} We can apply Lemma \ref{lem33}, with $K=\overline{\textnormal{co}}(A)$. For this, it suffices to establish that 
$$\forall n \in \mathcal{T}, \forall z \in \overline{\textnormal{co}}(A), \hspace{3mm} p_n(z)\leq C(z) \lambda_n.$$
The previous inequality is obtained by using (\ref{eq33}), the sublinearity and semicontinuity of $p_n$ for all $n\in \N$, together with the upper semicontinuity of the function $C$.
\end{proof}
%%%%%%%%%%%%%%%%%%%%%%%%%%%%%%%%%%%%%%%%%%%%%%%%%%%%%%%%%%%%%
%%%%%%%%%%%%%%%%%%%%%%%%%%%%%%%%%%%%%%%%%%%%%%%%%%%%%%%%%%%%%
%%%%%%%%%%%%%%%%%%%%%%%%%%%%%%%%%%%%%%%%%%%%%%%%%%%%%%%%%%%%%
\subsection{Preliminaries for multipliers in infinite horizon} 
%%%%%%%%%%%%%%%%%%%%%%%%%%%%%%%%%%%%%%%%%%%%%%%%%%%%%%%%%%%%%%
%%%%%%%%%%%%%%%%%%%%%%%%%%%%%%%%%%%%%%%%%%%%%%%%%%%%%%%%%%%%%%%
%%%%%%%%%%%%%%%%%%%%%%%%%%%%%%%%%%%%%%%%%%%%%%%%%%%%%%%%%%%%%%%
As consequence of Lemma \ref{lem33}, we obtain the following proposition. The sequences $(\lambda_n)_n \in (\R^+)^{\N}$ and $(f_n)_n \in (Z^*)^{\N}$ in the following result, correspond to the multipliers.
\begin{proposition}\label{prop35} Let $Z$ be a Banach space. Let $(f_n)_n \in (Z^*)^{\N}$ be a sequence of linear continuous functionnals on $Z$ and let $(\lambda_n)_n \in (\R^+)^{\N}$ such that $\lambda_n \rightarrow 0$ when $n \rightarrow + \infty$. Let $K$ be a non empty closed convex subset of $Z$ such that $ri( K)\neq \emptyset$. Suppose that 
\begin{itemize}   
\item[$(1)$] for all $z\in K$, there exists a real number $C_z$ such that, for all $n\in \N$, we have $f_n(z)\leq C_z \lambda_n$.
\item[$(2)$] $f_n\stackrel{w^*}{\rightarrow}0$ when $n \rightarrow + \infty$.
\end{itemize}
Let $a \in K$ and set $X:=\overline{\textnormal{Aff} (K)}-a$. Then, we have,
\begin{itemize} 
\item[(i)] $\|(f_n)_{|X}\|_{X^*}\rightarrow 0$ when $n \rightarrow + \infty$.
\item[(ii)] If moreover we assume that the codimension of $X$ in $Z$ is finite, then $\|f_n\|_{Z^*}\rightarrow 0$  when $n \rightarrow + \infty$.
\end{itemize}
\end{proposition}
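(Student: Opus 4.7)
The plan is to apply Lemma~\ref{lem33} with $\mathcal{T}=\N$ and $p_n=f_n$: each $f_n$, being linear and continuous on $Z$, is subadditive and lower semicontinuous, and condition~(1) of the proposition is precisely the hypothesis of the lemma. The lemma then furnishes, for the chosen $a\in K$, a point $b_a\in\textnormal{Aff}(K)$ and, for every bounded subset $B$ of $\overline{\textnormal{Aff}(K)}$, a constant $R_B\geq 0$ such that
$$\sup_{h\in B}f_n(h-a)\leq R_B\bigl(\lambda_n+f_n(b_a-a)\bigr)\quad\text{for all }n\in\N.$$

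For part~(i) I would specialize the lemma to $B:=a+B_X$, where $B_X$ denotes the closed unit ball of the closed subspace $X=\overline{\textnormal{Aff}(K)}-a$; the translate $a+B_X$ is a bounded subset of $\overline{\textnormal{Aff}(K)}$. Because $B_X$ is symmetric about the origin and $f_n$ is linear, $\sup_{x\in B_X}f_n(x)=\sup_{x\in B_X}|f_n(x)|=\|(f_n)_{|X}\|_{X^*}$, so the lemma yields
$$\|(f_n)_{|X}\|_{X^*}\leq R_B\bigl(\lambda_n+f_n(b_a-a)\bigr).$$
On the right-hand side, $\lambda_n\to 0$ by assumption and $f_n(b_a-a)\to 0$ since $b_a-a$ is a fixed element of $Z$ and $(f_n)$ is weak$^*$ null. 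Hence $\|(f_n)_{|X}\|_{X^*}\to 0$.

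For part~(ii) I would exploit the finite codimension of $X$: pick vectors $y_1,\dots,y_m\in Z$ whose cosets form a basis of $Z/X$ and set $Y:=\textnormal{span}(y_1,\dots,y_m)$, a finite-dimensional (hence closed) algebraic complement of $X$. Since $X$ is closed and $Y$ finite-dimensional, $Z=X\oplus Y$ topologically and the associated projections $\pi_X$, $\pi_Y$ are continuous. On $Y$ all norms are equivalent, and $f_n \stackrel{w^*}{\to} 0$ gives $f_n(y_i)\to 0$ for every $i$, so $\|(f_n)_{|Y}\|_{Y^*}\to 0$. Combined with~(i) through the estimate
$$\|f_n\|_{Z^*}\leq\|\pi_X\|\,\|(f_n)_{|X}\|_{X^*}+\|\pi_Y\|\,\|(f_n)_{|Y}\|_{Y^*},$$
this yields $\|f_n\|_{Z^*}\to 0$.

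The delicate step I expect to be the main obstacle is~(i): ruling out the Josefson--Nissenzweig situation in which a $w^*$-null sequence still has norm bounded away from zero. Lemma~\ref{lem33} is engineered precisely for this; once it is invoked on the symmetric bounded set $a+B_X$, linearity turns the one-sided estimate of the lemma into a two-sided control of the dual norm on $X$. Part~(ii) is then elementary, reducing to the fact that weak$^*$ convergence coincides with norm convergence on the finite-dimensional quotient~$Z/X$.
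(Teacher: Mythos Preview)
Your proof is correct and follows essentially the same route as the paper: invoke Lemma~\ref{lem33} on a bounded symmetric set inside $\overline{\textnormal{Aff}(K)}$ (you use $a+B_X$, the paper uses $a+S_X$) to bound $\|(f_n)_{|X}\|_{X^*}$ by $R_B(\lambda_n+f_n(b_a-a))\to 0$, then split $Z=X\oplus Y$ with $Y$ finite-dimensional and use that weak$^*$ and norm convergence agree on $Y$. The only differences are cosmetic (ball versus sphere, and your explicit projection norms $\|\pi_X\|,\|\pi_Y\|$ in place of the paper's unnamed constant $L$).
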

\begin{proof} Using Lemma \ref{lem33} with $\mathcal{T}=\N$, the linear continuous functions $f_n$ and the bounded set $B:=S_{X} +a$ of $\overline{\textnormal{Aff} (K)}$ (where, $S_{X}$ denotes the sphere of $X$), we get a point $b_0$ depending only on $X$ and a constant $R_B\geq 0$ such that 
$$\|(f_n)_{|X}\|_{X^*}=\sup_{\|h\|_X=1} f_n(h) \leq R_B \cdot (\lambda_n + f_n(b_0)).$$
Since $f_n\stackrel{w^*}{\rightarrow}0 \; (n \rightarrow + \infty)$ and $\lambda_n \rightarrow 0 \; (n \rightarrow + \infty)$ we obtain that $\|(f_n)_{|X}\|_{X^*}\rightarrow 0 \; (n \rightarrow + \infty)$. Suppose now that $X$ is of finite codimension in $Z$, then there exists a finite-dimensional subspace $E$ of $Z$, such that $Z= X\oplus E$. Thus, there exists $L> 0$ such that
$$\|f_n\|_{Z^*}\leq L\left(\|(f_n)_{|E}\|_{E^*}+\|(f_n)_{|X}\|_{X^*}\right).$$
Since $f_n\stackrel{w^*}{\rightarrow}0 \; (n \rightarrow + \infty)$ and since the weak-star topology and the norm topology coincids on $E$ since its dimension is finite, we have that $\|(f_n)_{|E}\|_{E^*}\longrightarrow 0 \; (n \rightarrow + \infty)$. On the other hand, we proved above that $\|(f_n)_{|X}\|_{X^*}\rightarrow 0$. Thus, $\|f_n\|_{Z^*}\longrightarrow 0 \; (n \rightarrow + \infty)$. 
\end{proof}
\begin{remark}\label{rem36}
Proposition \ref{prop35} shows that under the condition $(1)$, we have that $f_n\stackrel{w^*}{\not\rightarrow} 0$, whenever $\|(f_n)_{|X}\|_{X^*}\not\rightarrow 0.$ If moreover, $X$ is of finite codimension in $Z$, then $f_n \stackrel{w^*}{\not\rightarrow}0$, whenever $\|f_n\|_{Z^*}\not\rightarrow 0.$ Thus, the condition $(1)$ is a criterion ensuring that a sequence of norm one in an infinite dual Banach space, does not converges to $0$ in the weak$^*$ topology.
\end{remark}
To ensure that the multipliers are nontrivial at the limit, the authors in \cite{MM} used a lemma from [\cite{LY}, pp. 142, 135], which can be recovered by taking $C(z)=1$ for all $z\in Z$ in the following corollary.
\begin{definition}\label{def37} A subset $Q$ of a Banach space $Z$ is said to be of finite codimension in $Z$ if there exists a point $z_0$ in the closed convex hull of $Q$ such that the closed vector space generated by $Q- z_0:= \left\{q - z_0 | \hspace{1mm} q \in Q\right\}$ is of finite codimension in $Z$ and the closed convex hull of $Q-z_0$ has a no empty interior in this vector space.
\end{definition}
\begin{corollary}\label{cor38}  Let $Q\subset Z$ be a subset of finite codimension in $Z$. Let $C: Z\longrightarrow \R$ be a upper semicontinuous function. Let $\delta > 0$, $(f_k)_k \in (Z^*)$ and $\lambda_k \geq 0$, $\lambda_k \rightarrow 0 \; (k \rightarrow + \infty)$ such that
\begin{itemize}
\item[$(i)$] $\|f_k\| \geq \delta$, for all $k\in \N$ and $f_k\stackrel{w^*}{\rightarrow} f \; (k \rightarrow + \infty)$.
\item[$(ii)$] For all $z\in Q$, and for all $k\in \N$, $f_k(z)\leq C(z)\lambda_k $.
\end{itemize}
Then, $f \neq 0$.
\end{corollary}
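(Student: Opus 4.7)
The plan is to argue by contradiction: suppose $f=0$, i.e.\ $f_k\stackrel{w^*}{\rightarrow} 0$. I will show that under this assumption $\|f_k\|_{Z^*}\rightarrow 0$ as $k\rightarrow +\infty$, contradicting $\|f_k\|\geq\delta>0$. The engine of the proof is Corollary~\ref{cor34}: its hypothesis (\ref{eq33}) is precisely assumption $(ii)$ of the present statement, and the linear continuous functionals $f_k$ are in particular sublinear and lower semicontinuous.

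First, I would use Definition~\ref{def37} to extract a point $z_0\in \overline{\textnormal{co}}(Q)$ such that the closed linear span $X_0$ of $Q-z_0$ is of finite codimension in $Z$ and such that $\overline{\textnormal{co}}(Q-z_0)$ has nonempty interior in $X_0$. Setting $K:=\overline{\textnormal{co}}(Q)$, elementary bookkeeping yields $K-z_0=\overline{\textnormal{co}}(Q-z_0)$, $\overline{\textnormal{Aff}(K)}=z_0+X_0$, and $\textnormal{ri}(K)\neq\emptyset$ (any $X_0$-interior point of $K-z_0$ lies in $\textnormal{Aff}(K-z_0)$, and its translate is a relative interior point of $K$). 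Thus Corollary~\ref{cor34} applies with $A:=Q$, the given $C$ and $\lambda_k$, the functionals $p_k:=f_k$, and the distinguished point $a:=z_0\in K$.

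Taking the bounded set $B:=z_0+S_{X_0}\subset \overline{\textnormal{Aff}(K)}$, where $S_{X_0}$ is the unit sphere of $X_0$, Corollary~\ref{cor34} produces $b_{z_0}\in \textnormal{Aff}(K)$ and $R_B\geq 0$ such that for every $k\in\N$,
\[\|(f_k)_{|X_0}\|_{X_0^*}=\sup_{h\in S_{X_0}}f_k(h)=\sup_{h\in B}f_k(h-z_0)\leq R_B\bigl(\lambda_k+f_k(b_{z_0}-z_0)\bigr).\]
The right-hand side tends to $0$ because $\lambda_k\rightarrow 0$ and, under the contradiction hypothesis, $f_k(b_{z_0}-z_0)\rightarrow 0$; hence $\|(f_k)_{|X_0}\|_{X_0^*}\rightarrow 0$. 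Decomposing $Z=X_0\oplus E$ with $E$ finite-dimensional and using that on $E$ the weak-$^*$ and norm topologies coincide (exactly as in the proof of Proposition~\ref{prop35}(ii)), we obtain $\|(f_k)_{|E}\|_{E^*}\rightarrow 0$ and then $\|f_k\|_{Z^*}\rightarrow 0$, the desired contradiction. The main obstacle is the geometric bookkeeping of the first step --- translating the ``finite codimension'' of $Q$ (phrased via the closed span of $Q-z_0$) into the relative-interior and closed-affine-hull hypotheses demanded by Corollary~\ref{cor34}; once that reduction is in place, the Baire-category heart has already been packaged inside Lemma~\ref{lem33}.
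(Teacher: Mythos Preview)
Your proof is correct and follows essentially the same route as the paper. The paper first extends condition $(ii)$ from $Q$ to $\overline{\textnormal{co}}(Q)$ (using linearity of the $f_k$ and upper semicontinuity of $C$) and then invokes Proposition~\ref{prop35} directly, whereas you invoke Corollary~\ref{cor34} (which performs that extension internally) and then reproduce the finite-codimension splitting argument of Proposition~\ref{prop35}(ii) by hand; the underlying Baire-category machinery via Lemma~\ref{lem33} is identical in both, and your explicit verification that Definition~\ref{def37} yields $\textnormal{ri}(\overline{\textnormal{co}}(Q))\neq\emptyset$ and $\overline{\textnormal{Aff}(\overline{\textnormal{co}}(Q))}=z_0+X_0$ fills in a step the paper leaves implicit.
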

\begin{proof} First, note that from the condition $(ii)$, the linearity and continuity of $f_k$, $k\in \N$ and the upper semicontinuity of $C$, we have also that, for all $z\in \overline{\textnormal{co}}(Q)$ and for all $k \in \N$, $f_k(z)\leq C(z)\lambda_k $. Suppose by contradiction that $f=0$, then using Proposition \ref{prop35} and the fact that $Q$ is of finite codimension in $Z$, we get that $\|f_k\|_{Z^*}\rightarrow 0 \; (n \rightarrow + \infty)$, which contredicts the condition $(i)$.
\end{proof}
The following proposition is used in the proof of our main result Theorem \ref{thm22}. In Proposition \ref{prop39}, the sequence $(\beta^{(n)})_{n\geq 2}$ in$ (\R^+)$ and the list $(f_t^{(n)})_{1 \leq t \leq n+1} \in (X^*)^{n+1}$, correspond to the non trivial  multipliers at the finite horizon $n$, for all $n \geq 2$. The aim is to find conditions under which, these sequences have subsequences which converge to  non trivial multipliers at the infinite horizon.
\begin{proposition} \label{prop39} Let $Z$ be a separable Banach space and $Z^*$ its topological dual. Let $K$ be a closed convex subset of $Z$ such that $ri( K)\neq \emptyset$ and that $\overline{\textnormal{Aff} (K)}$ is of finite codimension in $Z$. 
Let $(\beta^{(n)})_{n\geq 2}$ be a sequence of nonegative real number and $(f_t^{(n)})_{1 \leq t \leq n+1} \in (Z^*)^{n+1}$, for all $n \geq 2$.  Let $s\in \N_*$ be a fixed natural number. Suppose that:
\begin{itemize}
\item[$(1)$] for all $n\geq 2$, $\beta^n+\|f_s^n\|_{Z^*}=1$,
\item[$(2)$] there exists $ a_t, b_t\geq 0 $ such that $ \|f_{t}^n\|_Z\leq a_t \beta^n+b_t\|f_s^n\|_Z$ for all $n\geq 2$ and for all $1\leq t \leq n+1$,
\item[$(3)$] for all $z\in K$, there exist a real number $c_z$ such that: $f_s^n(z)\leq c_z \beta^n$ for all $n\geq 2$.
\end{itemize}
Then there exist a strictly increasing map $k\mapsto n_k$, from $\N$ into $\N$, $\beta \in \R^+$ and $(f_t)_{t\geq 1} \in (Z^*)^{\N}$ such that: 
\begin{itemize}
\item[$(i)$] $\beta^{n_k} \longrightarrow \beta$ when $k\rightarrow +\infty$,
\item[$(ii)$] for each $t \in \N$, $f^{n_k}_t \stackrel{w^*}{\longrightarrow} f_t$ when $k\rightarrow +\infty$,
\item[$(iii)$] $(\beta,f_s) \neq (0,0)$.
\end{itemize}
\end{proposition}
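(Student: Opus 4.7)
\textbf{Proof plan for Proposition \ref{prop39}.}

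The plan is to first extract, via Banach--Alaoglu and a diagonal argument, a subsequence along which all the relevant quantities converge, and then to rule out the degenerate case $(\beta,f_s)=(0,0)$ by invoking Proposition \ref{prop35}.

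\emph{Extraction step.} By hypothesis $(1)$, both $(\beta^n)_n$ and $(\|f_s^n\|_{Z^*})_n$ lie in $[0,1]$. By hypothesis $(2)$, for every fixed $t\geq 1$ and every $n\geq \max(2,t-1)$, $\|f_t^n\|_{Z^*}\leq a_t+b_t$, so the family $(f_t^n)_n$ is norm-bounded in $Z^*$. Since $Z$ is separable, every closed ball of $Z^*$ is weak$^*$-compact and weak$^*$-metrizable, hence sequentially weak$^*$-compact. A standard diagonal extraction then produces a strictly increasing map $k\mapsto n_k$, a real number $\beta\in[0,1]$, and a family $(f_t)_{t\geq 1}\in(Z^*)^{\N_*}$ such that $\beta^{n_k}\rightarrow\beta$ and $f_t^{n_k}\stackrel{w^*}{\rightarrow}f_t$ for every $t\geq 1$; this gives $(i)$ and $(ii)$.

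\emph{Non-triviality.} For $(iii)$, I argue by contradiction: suppose $\beta=0$ and $f_s=0$. Along the extracted subsequence set $g_k:=f_s^{n_k}$ and $\lambda_k:=\beta^{n_k}$. Each $g_k$ is linear and continuous, hence subadditive and lower semicontinuous on $Z$; hypothesis $(1)$ of Proposition \ref{prop35} for the family $(g_k)_k$ on $K$ is precisely the present hypothesis $(3)$, and hypothesis $(2)$ of Proposition \ref{prop35} is $g_k\stackrel{w^*}{\rightarrow}0$. Since $\overline{\textnormal{Aff}(K)}$ has finite codimension in $Z$, the subspace $X:=\overline{\textnormal{Aff}(K)}-a$ (for any fixed $a\in K$) is of finite codimension in $Z$ as well. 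Proposition \ref{prop35}$(ii)$ then yields $\|g_k\|_{Z^*}\rightarrow 0$, so $\beta^{n_k}+\|f_s^{n_k}\|_{Z^*}\rightarrow 0$, contradicting the normalization $(1)$. Hence $(\beta,f_s)\neq (0,0)$.

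\emph{Main obstacle.} The extraction is entirely routine; the substantive content is $(iii)$. In infinite dimension, the Josefson--Nissenzweig phenomenon allows unit-norm sequences in $Z^*$ to converge weak$^*$ to $0$, so the normalization $\beta^n+\|f_s^n\|_{Z^*}=1$ alone cannot guarantee that the limit is non-trivial. The rigidity needed comes from the interplay between the one-sided growth control $(3)$ on $K$ and the finite-codimension hypothesis on $\overline{\textnormal{Aff}(K)}$, and this is exactly what is captured by Proposition \ref{prop35}$(ii)$.
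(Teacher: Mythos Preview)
Your proof is correct and follows essentially the same route as the paper: Banach--Alaoglu together with a diagonal extraction (using separability of $Z$ for sequential weak$^*$ compactness) to obtain $(i)$ and $(ii)$, then a contradiction argument via Proposition \ref{prop35}$(ii)$ to secure $(iii)$. Your write-up is in fact slightly more explicit than the paper's in justifying the sequential compactness step.
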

\begin{proof} From $(1)$ and $(2)$ we get that, for each $t\geq 1$, the sequences $(f^n_t)_{1\leq t \leq n+1}$ and $(\lambda^n_0)_{n \geq 2}$ are bounded. Hence, using the Banach-Alaoglu theorem and the diagonal process of Cantor, we get a strictly increasing map $k\mapsto n_k$, from $\N$ into $\N$, a nonegative real number $\beta\in \R^+$, and a sequence $(f_t)_{t\geq 1}\in (Z^*)^{\N_*}$ satisfying $(i)$ and $(ii)$. Suppose by contradiction that  $(\beta,f_s)=(0,0)$, i.e. $\beta^{n_k}\longrightarrow 0$ and $f^{n_k}_s \stackrel{w^*}{\longrightarrow} 0$ when $k\rightarrow +\infty$. Using the condition $(3)$ and Proposition \ref{prop35} we have that $\|f^{n_k}_s\|_{Z^*}\longrightarrow 0$ when $k\rightarrow +\infty$. Since $\beta^{n_k}\longrightarrow 0$ when $k\rightarrow +\infty$, we have also $\beta^{n_k}+\|f^{n_k}_s\|_{Z^*}\longrightarrow 0$ which is a contradiction with the condition $(1)$.
\end{proof}
\section{Multipliers}
In this section, after the recall of the method of reduction to finite horizon, we establish multiplier rules (Lemma \ref{lem45} and Lemma \ref{lem46}), in the spirit of Fritz John's theorem, for the problems of finite horizon.
\vskip2mm

First we recall the method of reduction to finite horizon. When $((\hat{x}_t)_{t \in \N}, (\hat{u}_t)_{t \in \N})$ is an optimal solution of (${\bf PE}_k(\sigma)$), $k \in \{1,2,3 \}$, we build the following finite-horizon problem.
\[
({\bf EF}(\sigma))
\left\{
\begin{array}{rl}
{\rm Maximize} & J^T(x_1,...,x_T, u_0,u...,u_T) := \sum_{t=0}^T \phi_t(x_t,u_t)\\
{\rm when} & \forall t \in \{0,...,T \}, \; x_{t+1} = f_t(x_t,u_t)\\
\null & x_0 = \sigma, \; x_{T+1} = \hat{x}_{T+1}.
\end{array}
\right.
\]
Similarly, when $((\hat{x}_t)_{t \in \N}, (\hat{u}_t)_{t \in \N})$ is an optimal solution of (${\bf PI}_k(\sigma)$), $k \in \{1,2,3 \}$, we build the following finite-horizon problem
\[
({\bf IF}(\sigma)) 
\left\{
\begin{array}{rl}
{\rm Maximize} & J^T(x_1,...,x_T, u_0,u...,u_T) := \sum_{t=0}^T \phi_t(x_t,u_t)\\
{\rm when} & \forall t \in \{0,...,T \}, \; x_{t+1} \leq f_t(x_t,u_t)\\
\null & x_0 = \sigma, \; x_{T+1} = \hat{x}_{T+1}.
\end{array}
\right.
\]
The proof of the following result is similar to the proof given in \cite{JBHC}.
\begin{lemma}\label{lem41}
Let $k \in \{1,2,3 \}$. When $((\hat{x}_t)_{t \in \N}, (\hat{u}_t)_{t \in \N})$ is an optimal solution of (${\bf PE}_k(\sigma)$) (respectively (${\bf PI}_k(\sigma)$)), for all $T \in \N$, $T \geq 2$, then the restriction \\
$(\hat{x}_1,..., \hat{x}_T, \hat{u}_0,..., \hat{u}_T)$ is an optimal solution of (${\bf EF}(\sigma$)  (respectively (${\bf IF}(\sigma$))). 
\end{lemma}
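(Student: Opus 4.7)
The plan is the standard concatenation/gluing argument: assume for contradiction that some $(x_1,\dots,x_T,u_0,\dots,u_T)$ is feasible for $({\bf EF}(\sigma))$ (resp.\ $({\bf IF}(\sigma))$) and produces a strictly larger value of $J^T$ than the restriction of the hat process; then I paste it with the tail of the hat process and derive a contradiction with the optimality of $((\hat{x}_t)_{t\in \N},(\hat{u}_t)_{t\in \N})$.

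Concretely, I would define the extended process $((\tilde{x}_t)_{t\in\N},(\tilde{u}_t)_{t\in\N})$ by $\tilde{x}_0:=\sigma$, $\tilde{x}_t:=x_t$ and $\tilde{u}_t:=u_t$ for $0 \leq t \leq T$ (setting $\tilde{x}_{T+1}:=\hat{x}_{T+1}$), and $\tilde{x}_t:=\hat{x}_t$, $\tilde{u}_t:=\hat{u}_t$ for $t\geq T+1$. I then verify that this process lies in $\mathrm{Ead}(\sigma)$ (resp.\ $\mathrm{Iad}(\sigma)$). The dynamics hold for $0\leq t\leq T-1$ by feasibility in the finite-horizon problem, at the junction $t=T$ because the terminal constraint $x_{T+1}=\hat{x}_{T+1}$ guarantees $\tilde{x}_{T+1}=\hat{x}_{T+1}=f_T(x_T,u_T)=f_T(\tilde{x}_T,\tilde{u}_T)$ (resp.\ $\tilde{x}_{T+1}=\hat{x}_{T+1}\leq f_T(x_T,u_T)=f_T(\tilde{x}_T,\tilde{u}_T)$ in the $({\bf IF}(\sigma))$ case), and for $t\geq T+1$ because the tail coincides with the hat process, which already satisfies \eqref{eq11} (resp.\ \eqref{eq12}).

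The comparison of criteria is then immediate because, by construction, $\phi_t(\tilde{x}_t,\tilde{u}_t)=\phi_t(\hat{x}_t,\hat{u}_t)$ for every $t\geq T+1$. For $k=1$, this identity forces $\sum_{t=0}^{+\infty}\phi_t(\tilde{x}_t,\tilde{u}_t)$ to converge whenever the hat series converges, so $((\tilde{x}_t),(\tilde{u}_t))\in\mathrm{Edom}(\sigma)$ (resp.\ $\mathrm{Idom}(\sigma)$), and
\[
J((\tilde{x}_t),(\tilde{u}_t))-J((\hat{x}_t),(\hat{u}_t))=J^T(x,u)-J^T(\hat{x},\hat{u}),
\]
so any strict improvement at the finite horizon propagates to a strict improvement at the infinite horizon. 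For $k=2$ and $k=3$, the partial sums $\sum_{t=0}^{T'}(\phi_t(\hat{x}_t,\hat{u}_t)-\phi_t(\tilde{x}_t,\tilde{u}_t))$ are constant for $T'\geq T$, equal to $J^T(\hat{x},\hat{u})-J^T(x,u)$; hence the $\limsup$ and $\liminf$ in the definitions of $({\bf PE}_2(\sigma))$, $({\bf PE}_3(\sigma))$ (resp.\ $({\bf PI}_2(\sigma))$, $({\bf PI}_3(\sigma))$) both reduce to this finite quantity, and a strict improvement of $J^T$ again contradicts the optimality assumption.

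There is no real obstacle here: the whole argument rests on the observation that the terminal constraint $x_{T+1}=\hat{x}_{T+1}$ is precisely what is needed to make the gluing compatible with the state equation (or inequality), and on the trivial fact that identical tails cancel in the objective. The only mild subtlety is checking $\mathrm{Edom}$/$\mathrm{Idom}$-membership of the pasted process in the $k=1$ case, which follows at once from convergence of the hat series together with the tail equality. The argument for \eqref{eq12} is the same as for \eqref{eq11} with the equality in the dynamics verification replaced by the corresponding inequality, using transitivity in the ordered Banach space $X$.
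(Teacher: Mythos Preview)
Your argument is correct and is precisely the standard concatenation/gluing proof; the paper does not spell out a proof of its own but simply remarks that it is similar to the one in \cite{JBHC}, which is exactly what you have reproduced. There is nothing to add.
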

To work on these problems, we introduce several notations. We write ${\bf x}^T :=(x_1,...,x_T) \in \prod_{t=1}^T X_t$ and ${\bf u}^T := (u_0,...,u_T) \in \prod_{t=0}^T U_t$. For all $t \in \{0,...,T \}$, we define the mapping $g^T_t : \prod_{t=1}^T X_t \times \prod_{t=0}^T U_t \rightarrow X_{t+1}$ by setting 
\begin{equation}\label{eq41}
g^T_t({\bf x}^T, {\bf u}^T) := 
\left\{
\begin{array}{lcl}
-x_1 + f_0(\sigma, u_0) & {\rm if} & t=0\\
-x_{t+1} + f_t(x_t,u_t) & {\rm if} & t \in \{1,...,T-1 \}\\
-\hat{x}_{T+1} + f_T(x_T,u_T) & {\rm if} & t=T.
\end{array}
\right.
\end{equation}
We define $g^T : \prod_{t=1}^T X_t \times \prod_{t=0}^T U_t \rightarrow \prod_{t=0}^T X_t$ by setting
\begin{equation}\label{eq42}
g^t({\bf x}^T, {\bf u}^T) := (g^T_0({\bf x}^T, {\bf u}^T), ..., g^T_T({\bf x}^T, {\bf u}^T)).
\end{equation}
And so the problem (${\bf EF}(\sigma)$) is exactly
\begin{equation}\label{eq43}
\left\{
\begin{array}{cl}
{\rm Maximize}& J^T({\bf x}^T, {\bf u}^T)\\
{\rm when} & g^T({\bf x}^T, {\bf u}^T) = 0
\end{array}
\right.
\end{equation}
and the problem problem $({\bf IF}(\sigma))$ is exactly
\begin{equation}\label{eq44}
 \left\{
\begin{array}{cl}
{\rm Maximize}& J^T({\bf x}^T, {\bf u}^T)\\
{\rm when} & g^T({\bf x}^T, {\bf u}^T) \geq  0
\end{array}
\right.
\end{equation}
Under (A3), $g^T$ is of class $C^1$ at $({\bf \hat{x}}^T, {\bf \hat{u}}^T)$ as a composition of mappings of class $C^1$, and the calculation of its differential gives
$$Dg^T({\bf x}^T, {\bf u}^T) \cdot ({\bf \delta x}^T, {\bf \delta u}^T)= (Dg^T_0({\bf x}^T, {\bf u}^T) \cdot ({\bf \delta x}^T, {\bf \delta u}^T)       ,...,Dg^T_T({\bf x}^T, {\bf u}^T) \cdot ({\bf \delta x}^T, {\bf \delta u}^T))$$
and we have 
$$Dg^T_0({\bf x}^T, {\bf u}^T) \cdot ({\bf \delta x}^T, {\bf \delta u}^T) = - \delta x_1 + D_2f_0(\sigma, u_0) \cdot \delta u_0,$$
and when $t \in \{1,...,T-1 \}$, 
$$Dg^T_t({\bf x}^T, {\bf u}^T) \cdot ({\bf \delta x}^T, {\bf \delta u}^T) = - \delta x_{t+1} + D_1f_t(x_t,u_t) \cdot \delta x_t + D_2 f_t(x_t,u_t) \cdot\delta u_t,$$
and 
$$Dg^T_T({\bf x}^T, {\bf u}^T) \cdot ({\bf \delta x}^T, {\bf \delta u}^T) = D_1f_T(x_T,u_Y) \cdot \delta x_T + D_2 f_T(x_T,u_T) \cdot \delta u_T.$$
Thus in order to study Im$Dg^T({\bf \hat{x}}^T, {\bf \hat{u}}^T)$ we need to treat the equation 
$$Dg^T({\bf \hat{x}}^T, {\bf \hat{u}}^T) \cdot ({\bf \delta x}^T, {\bf \delta u}^T) = (b_1,...,b_{T+1}).$$
It is the following system
\begin{equation}\label{eq45}
\left\{
\begin{array}{rcl}
b_1 &=& - \delta x_1 + D_2f_0(\sigma, u_0) \cdot \delta u_0 \\
 b_2 &=& - \delta x_2 + Df_1(\hat{x}_1, \hat{u}_1) \cdot (\delta x_1,\delta u_1) \\
.... \\
b_T &=& - \delta x_T + Df_{T-1}(\hat{x}_{T-1}, \hat{u}_{T-1}) \cdot  (\delta x_{T-1}, \delta u_{T-1})\\
b_{T+1} &=& Df_{T}(\hat{x}_{T}, \hat{u}_{T}) \cdot (\delta x_{T}, \delta u_{T}).
\end{array}
\right.
\end{equation}
\begin{lemma}\label{lem42} Under (A1) and (A3), the set Im$D_1g^T({\bf \hat{x}}^T, {\bf \hat{u}}^T)$ is closed into $X^{T+1}$.
\end{lemma}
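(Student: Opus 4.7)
The plan is to exploit the lower-triangular structure of the linearized dynamics in (4.5) to describe $\mathrm{Im}\, D_1 g^T(\hat{\mathbf{x}}^T, \hat{\mathbf{u}}^T)$ explicitly as the graph of a continuous linear map from $X^T$ into $X$; since such graphs are automatically closed, this will suffice. The proof will not require any of the hypotheses (A2)--(A6), only (A1) and (A3) to ensure the existence and continuity of the partial derivatives.

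First I would specialize (4.5) to $\delta\mathbf{u}^T = 0$. The action of $D_1 g^T(\hat{\mathbf{x}}^T, \hat{\mathbf{u}}^T)$ on $\delta\mathbf{x}^T = (\delta x_1, \ldots, \delta x_T) \in X^T$ is then
\[
\delta\mathbf{x}^T \longmapsto (c_1, c_2, \ldots, c_T, c_{T+1}),
\]
with $c_1 = -\delta x_1$, with $c_j = -\delta x_j + D_1 f_{j-1}(\hat{x}_{j-1}, \hat{u}_{j-1}) \cdot \delta x_{j-1}$ for $2 \le j \le T$, and with $c_{T+1} = D_1 f_T(\hat{x}_T, \hat{u}_T) \cdot \delta x_T$. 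I would then isolate the map $L : X^T \to X^T$ that collects the first $T$ output components. This $L$ is continuous linear, and lower triangular with $-\mathrm{Id}$ on the diagonal, so it is a bijection whose inverse $L^{-1}$ is given by the explicit forward recursion $\delta x_1 = -c_1$, $\delta x_j = -c_j + D_1 f_{j-1}(\hat{x}_{j-1}, \hat{u}_{j-1})\,\delta x_{j-1}$ for $2 \le j \le T$. Continuity of $L^{-1}$ is immediate from this recursion and the continuity of the $D_1 f_{j-1}(\hat{x}_{j-1}, \hat{u}_{j-1})$.

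Next I would introduce the continuous linear map $\Phi : X^T \to X$ defined by $\Phi(c) := D_1 f_T(\hat{x}_T, \hat{u}_T) \circ \pi_T \circ L^{-1}(c)$, where $\pi_T : X^T \to X$ denotes the projection onto the last factor. From the description above, a tuple $(c_1, \ldots, c_{T+1})$ lies in $\mathrm{Im}\, D_1 g^T(\hat{\mathbf{x}}^T, \hat{\mathbf{u}}^T)$ if and only if $c_{T+1} = \Phi(c_1, \ldots, c_T)$. Thus
\[
\mathrm{Im}\, D_1 g^T(\hat{\mathbf{x}}^T, \hat{\mathbf{u}}^T) = \{(c, \Phi(c)) : c \in X^T\} \subset X^T \times X = X^{T+1},
\]
which is the graph of the continuous map $\Phi$ and hence closed.

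I do not foresee a substantive obstacle: the whole argument rests on the triangularity of the linearized one-step dynamics and on continuity of $D_1 f_t(\hat{x}_t, \hat{u}_t)$. The closedness of the range of $D_2 f_t$ (assumption (A5)) plays no role here; it will only enter later, when the full operator $Dg^T$ (involving $\delta\mathbf{u}^T$) is analyzed.
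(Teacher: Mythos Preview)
Your proof is correct. Both your argument and the paper's exploit the same lower-triangular structure of $D_1 g^T(\hat{\mathbf{x}}^T,\hat{\mathbf{u}}^T)$, namely that the first $T$ output components determine $\delta x_1,\ldots,\delta x_T$ uniquely by forward recursion; the difference lies in how closedness is extracted from this. The paper argues sequentially: given a convergent sequence $(b^n)$ in the image, it runs the recursion to show that the preimages $\delta x^n_t$ converge coordinate by coordinate, and then checks that the limit satisfies the last equation. You instead package the forward recursion as a continuous linear isomorphism $L:X^T\to X^T$ and recognize the image as the graph of the continuous map $\Phi = D_1 f_T(\hat{x}_T,\hat{u}_T)\circ\pi_T\circ L^{-1}$, which is automatically closed. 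Your formulation is a bit more structural and makes the conclusion immediate; the paper's sequential verification is essentially a hands-on proof of the continuity of $\Phi$. Neither approach needs anything beyond (A1) and (A3), as you note.
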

\begin{proof}
Suppose that a sequence $((b^n_1,...,b^n_{T+1}))_n \in ({\rm Im}D_1g^T({\bf \hat{x}}^T, {\bf \hat{u}}^T))^{\N}$ converges to some $(b_1,b_2,...,b_{T+1})$. We prove that $(b_1,b_2,...,b_{T+1})\in {\rm Im}D_1g^T({\bf \hat{x}}^T, {\bf \hat{u}}^T)$. Indeed, there exists $(\delta x^n_1,\delta x^n_2,..., \delta x^n_{T}) \in X^T$ satisfying
\begin{equation}\label{eq46}
\left.
\begin{array}{rcl}
b^n_1 &=& - \delta x^n_1 \\
 b^n_2 &=& - \delta x^n_2 + Df_1(\hat{x}_1, \hat{u}_1) \cdot \delta x^n_1 \\
.... \\
b^n_T &=& - \delta x^n_T + Df_{T-1}(\hat{x}_{T-1}, \hat{u}_{T-1}) \cdot \delta x^n_{T-1}\\
b^n_{T+1} &=& Df_{T}(\hat{x}_{T}, \hat{u}_{T}) \cdot \delta x^n_{T}.
\end{array}
\right\}
\end{equation}
Since $(b^n_1)_n$ converges to $b_1$, we get that $(\delta x^n_1)_n$ converges to some $\delta x_1$ and so $(Df_1(\hat{x}_1, \hat{u}_1) \cdot \delta x^n_1)_n$ converges to $Df_1(\hat{x}_1, \hat{u}_1) \cdot \delta x_1$ by continuity. Since $(b^n_2)_n$ converges to $b_2$, we get that $(\delta x^n_2)_n$ converges to some $\delta x_2$ and so $b_2=- \delta x_2 + Df_1(\hat{x}_1, \hat{u}_1) \cdot \delta x_1$. We proceed inductively to obtain 
\begin{equation}\label{eq47}
\left.
\begin{array}{rcl}
b1 &=& - \delta x_1 \\
 b_2 &=& - \delta x_2 + Df_1(\hat{x}_1, \hat{u}_1) \cdot \delta x_1 \\
.... \\
b_T &=& - \delta x_T + Df_{T-1}(\hat{x}_{T-1}, \hat{u}_{T-1}) \cdot \delta x_{T-1}\\
b_{T+1} &=& Df_{T}(\hat{x}_{T}, \hat{u}_{T}) \cdot \delta x_{T}.
\end{array}
\right\}
\end{equation}
This shows that $(b_1,b_2,...,b_{T+1})\in {\rm Im}D_1g^T({\bf \hat{x}}^T, {\bf \hat{u}}^T)$ and conclude the proof.
\end{proof}
The proof of the following result is similar to the proof of Lemma 3.10 in \cite{BB}, replacing Lemma 3.5 in \cite{BB} by Lemma \ref{lem42}.
\begin{lemma}\label{lem43} Under (A1), (A3) and (A5), the range Im$Dg^T({\bf \hat{x}}^T, {\bf \hat{u}}^T)$ is closed in $X^{T+1}$.
\end{lemma}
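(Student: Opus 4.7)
The plan is to decompose the total differential into partials with respect to state and control and then combine Lemma \ref{lem42} with assumption (A5). Writing $Dg^T(\hat{\mathbf{x}}^T,\hat{\mathbf{u}}^T)\cdot(\delta\mathbf{x}^T,\delta\mathbf{u}^T) = D_1g^T(\hat{\mathbf{x}}^T,\hat{\mathbf{u}}^T)\cdot\delta\mathbf{x}^T + D_2g^T(\hat{\mathbf{x}}^T,\hat{\mathbf{u}}^T)\cdot\delta\mathbf{u}^T$, one gets
$$\mathrm{Im}\,Dg^T(\hat{\mathbf{x}}^T,\hat{\mathbf{u}}^T) \;=\; \mathrm{Im}\,D_1g^T(\hat{\mathbf{x}}^T,\hat{\mathbf{u}}^T) \;+\; \mathrm{Im}\,D_2g^T(\hat{\mathbf{x}}^T,\hat{\mathbf{u}}^T).$$
Lemma \ref{lem42} directly asserts that the first summand is closed in $X^{T+1}$, so the strategy is to show that the second summand is closed and of finite codimension in $X^{T+1}$, and then to combine the two via a standard Banach-space fact.

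For the second summand, inspection of the definition \eqref{eq41} of $g^T_t$ shows that the partial $D_2g^T$ acts diagonally on the control variations: for $\delta\mathbf{u}^T=(\delta u_0,\dots,\delta u_T)$,
$$D_2g^T(\hat{\mathbf{x}}^T,\hat{\mathbf{u}}^T)\cdot\delta\mathbf{u}^T \;=\; \bigl(D_2f_0(\sigma,\hat{u}_0)\cdot\delta u_0,\; D_2f_1(\hat{x}_1,\hat{u}_1)\cdot\delta u_1,\; \dots,\; D_2f_T(\hat{x}_T,\hat{u}_T)\cdot\delta u_T\bigr).$$
Hence $\mathrm{Im}\,D_2g^T(\hat{\mathbf{x}}^T,\hat{\mathbf{u}}^T) = \prod_{t=0}^T \mathrm{Im}\,D_2f_t(\hat{x}_t,\hat{u}_t)$. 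Under (A5) each factor is closed and of finite codimension $d_t$ in $X$, so the product is closed in $X^{T+1}$ and of finite codimension $\sum_{t=0}^T d_t$ there (take supplementary finite-dimensional subspaces factorwise).

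To conclude I would invoke the elementary fact that the sum of a closed subspace $F$ and a closed subspace $G$ of finite codimension in a Banach space $Z$ is closed: passing to the quotient $Z/G$, which is finite-dimensional, the image $(F+G)/G$ is automatically closed there, and since the quotient map is continuous, its preimage $F+G$ is closed in $Z$. Applying this to $Z=X^{T+1}$ with $F=\mathrm{Im}\,D_1g^T(\hat{\mathbf{x}}^T,\hat{\mathbf{u}}^T)$ and $G=\mathrm{Im}\,D_2g^T(\hat{\mathbf{x}}^T,\hat{\mathbf{u}}^T)$ yields the claim.

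The only point requiring genuine input from the infinite-dimensional setting is the need for the finite-codimension half of (A5), without which one could only say that $F+G$ is a sum of two closed subspaces in an infinite-dimensional Banach space, which need not be closed in general; this is the step I expect to be the main obstacle, and it is precisely the purpose of (A5) to rule it out.
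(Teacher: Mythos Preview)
Your proof is correct and follows the same decomposition $\mathrm{Im}\,Dg^T = \mathrm{Im}\,D_1g^T + \mathrm{Im}\,D_2g^T$ that the paper's approach (deferred to \cite{BB}) uses, with Lemma~\ref{lem42} handling the first summand and (A5) the second.

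One minor observation: in your final quotient argument the closedness of $F$ is never invoked---any linear subspace containing a closed subspace $G$ of finite codimension is automatically closed, since its image in the finite-dimensional quotient $Z/G$ is closed and the canonical projection is continuous. So Lemma~\ref{lem42} is in fact redundant along your route; the closedness and finite-codimension parts of (A5) alone already suffice. The paper's insistence on Lemma~\ref{lem42} (via the reference to Lemma~3.5 of \cite{BB}) suggests the original argument combines the two summands by a different mechanism that genuinely needs $\mathrm{Im}\,D_1g^T$ closed, but your version is self-contained and slightly more economical.
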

The following theorem was established in the book of Jahn \cite{Ja} (Theorem 5.3 in p.106-111, and Theorem 5.6, p. 118).
\begin{theorem}\label{thm44}
Let $\Xi$, $Y$ and $Z$ three real Banach spaces, and $\hat{\xi} \in \Xi$. We assume that the following conditions are fulfilled.
\begin{enumerate}
\item $Y$ is ordered by a cone $C$ with a nonempty interior.
\item $\hat{S}$ is a convex subset of $\Xi$ with a nonempty interior.
\item ${\mathcal I} : \Xi \rightarrow \R$ is a functional which is Fr\'echet differentiable at $\hat{\xi}$.
\item $\Gamma : \Xi \rightarrow Y$ is a mapping which is Fr\'echet differentiable at $\hat{\xi}$.
\item $H :  \Xi \rightarrow Z$ is a mapping which is Fr\'echet differentiable at $\hat{\xi}$.
\item $S := \{ \xi \in \hat{S} : \Gamma (\xi) \in - C, H(\xi) = 0 \}$ is nonempty.
\item Im$DH(\hat{\xi})$ is closed into $Z$.
\end{enumerate}
If $\hat{\xi}$ is a solution of the following minimization problem 
\[
\left\{
\begin{array}{cl}
{\rm Minimize} & {\mathcal I}(\xi)\\
{\rm when} & \xi \in S
\end{array}
\right.
\]
then there exist $\lambda_0 \in [0, + \infty)$, $\Lambda_1 \in Y^*$ a positive linear functional, $\Lambda_2 \in Z^*$ such that the following conditions are satisfied:
\begin{enumerate}
\item[(i)] $(\lambda_0, \Lambda_1, \Lambda_2) \neq (0,0,0)$
\item[(ii)] $\langle  \lambda_0 D{\mathcal I}( \hat{\xi}) + \Lambda_1 \circ D \Gamma ( \hat{\xi}) + \Lambda_2 \circ DH( \hat{\xi}), \xi - \hat{\xi} \rangle \leq 0$ for all $\xi \in S$.
\end{enumerate}
\end{theorem}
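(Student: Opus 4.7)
The plan is to prove Theorem \ref{thm44} via a Hahn--Banach separation argument carried out in the product Banach space $\R\times Y \times Z$. This is the classical route for Fritz John--type multiplier rules: linearize at $\hat\xi$, use optimality to show that a certain convex ``achievable improvement'' set is disjoint from an open ``improving'' cone, and then extract the three multipliers from a separating hyperplane. The closed range hypothesis on $DH(\hat\xi)$ is the crucial regularity condition that makes linearization sufficient.

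First I would introduce the convex set
\[
A := \Bigl\{\bigl(D\mathcal{I}(\hat\xi)(\xi-\hat\xi)+r,\ \Gamma(\hat\xi)+D\Gamma(\hat\xi)(\xi-\hat\xi)+c,\ DH(\hat\xi)(\xi-\hat\xi)\bigr) : \xi\in\hat S,\ r\geq 0,\ c\in C\Bigr\}
\]
inside $\R\times Y\times Z$. Convexity of $A$ follows from convexity of $\hat S$ and of $C$ together with linearity of the derivatives. The key claim is that the open convex set $B := (-\infty,0)\times (-\mathrm{Int}\,C)\times \{0\}$ is disjoint from $A$. This is the step where optimality is used: if some $(a,b,0)\in A\cap B$ existed, then a Ljusternik--Graves--Robinson openness argument would lift this linearized improvement to an actual feasible perturbation $\xi\in S$ close to $\hat\xi$ with $\mathcal{I}(\xi)<\mathcal{I}(\hat\xi)$, contradicting the optimality of $\hat\xi$.

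Next I would apply the geometric Hahn--Banach theorem. Because $B$ is open, convex and nonempty (using $\mathrm{Int}\,C\neq \emptyset$) and $A\cap B=\emptyset$, there exists a nonzero continuous linear functional $(\lambda_0,\Lambda_1,\Lambda_2)\in \R\times Y^*\times Z^*$ separating the two sets. Since $A$ is invariant under adding $(r,c,0)$ with $r\geq 0$ and $c\in C$, letting these components go to infinity forces $\lambda_0\geq 0$ and $\Lambda_1(c)\geq 0$ for every $c\in C$, i.e.\ $\Lambda_1$ positive. Taking $\xi=\hat\xi$ in $A$ and using $\Gamma(\hat\xi)\in -C$ produces the complementary slackness relation $\Lambda_1(\Gamma(\hat\xi))=0$; substituting this back into the separation inequality, one obtains the variational inequality in condition $(ii)$ (with the appropriate sign determined by the direction of separation). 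Nontriviality $(\lambda_0,\Lambda_1,\Lambda_2)\neq (0,0,0)$ is automatic since the separating functional is nonzero.

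The main technical obstacle is the disjointness $A\cap B=\emptyset$: translating linearized non-improvement into genuine non-improvement. This requires a Ljusternik-type openness result for the equality constraint $H$ combined with Robinson-type regularity for the inequality constraint $\Gamma$. The closed range hypothesis on $DH(\hat\xi)$ is exactly what is needed here, since it allows one to view $DH(\hat\xi)$ as a surjection onto the Banach space $\mathrm{Im}\,DH(\hat\xi)$ and apply the open mapping theorem to lift approximate equalities into exact ones, modulo the equality constraint. Once this lifting is established, the rest of the proof is a routine extraction of the multipliers and verification of their sign and slackness properties.
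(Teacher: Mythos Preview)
The paper does not prove this theorem at all: immediately before the statement it says ``The following theorem was established in the book of Jahn \cite{Ja} (Theorem 5.3 in p.~106--111, and Theorem 5.6, p.~118)'', and the result is then used as a black box in the proofs of Lemma~\ref{lem45} and Lemma~\ref{lem46}. So there is no proof in the paper to compare your attempt against.

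That said, your sketch is the standard separation route to Fritz John multipliers and is essentially the approach in Jahn's book. One point where your explanation is slightly off is the role of the closed-range hypothesis. It is not used to ``lift approximate equalities into exact ones'' in a single stroke; rather, it feeds a dichotomy. Either $\mathrm{Im}\,DH(\hat\xi)=Z$, in which case Ljusternik--Graves gives local surjectivity of $H$ and your disjointness argument $A\cap B=\emptyset$ goes through; or $\mathrm{Im}\,DH(\hat\xi)$ is a closed \emph{proper} subspace of $Z$, in which case Hahn--Banach immediately produces a nonzero $\Lambda_2\in Z^*$ annihilating $\mathrm{Im}\,DH(\hat\xi)$, and one simply takes $\lambda_0=0$, $\Lambda_1=0$. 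Without closedness, a dense but non-surjective image would block both branches. Also, the complementary slackness $\Lambda_1(\Gamma(\hat\xi))=0$ that you derive is not part of the stated conclusion here, so that step is superfluous for what is being asked.
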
  
\vskip1mm
\begin{lemma}\label{lem45}
Let $((\hat{x}_t)_{t \in \N}, (\hat{u}_t)_{t \in \N})$ be an optimal process of $({\bf PE}_k)(\sigma))$ when $k \in \{1,2,3 \}$. Under (A1), (A3) and (A5), we assume moreover that $U_t$ is convex for all $t \in \N$. Then, for all $T \in \N$, $T \geq 2$, there exist $\lambda^{T}_0 \in \R$ and 
$(p^{T}_t)_{1 \leq t \leq T+1} \in (X^*)^{T+1}$ such that the following conditions hold.
\begin{enumerate}
\item[(a)] $\lambda^{T}_0$ and $(p^{T}_t)_{1 \leq t \leq T+1}$ are not simultaneously equal to zero.
\item[(b)] $\lambda^{T}_0 \geq 0$.
\item[(c)] $p^{T}_t = p^{T}_{t+1} \circ D_1f_t(\hat{x}_t, \hat{u}_t) + \lambda^{T}_0.D_1 \phi_t(\hat{x}_t, \hat{u}_t)$  for all $t \in \{1,...,T \}$.
\item[(d)] $\langle \lambda^{T}_0.D_2\phi_t(\hat{x}_t, \hat{u}_t) + p^{T}_{t+1} \circ D_2f_t(\hat{x}_t, \hat{u}_t), u_t - \hat{u}_t \rangle \leq 0$ for all $t \in \{0,...,T \}$ and for all $u_t \in U_t$.
\end{enumerate}
\end{lemma}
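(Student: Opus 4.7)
The plan is to combine the finite-horizon reduction of Lemma \ref{lem41} with the Fritz--John-type multiplier rule of Theorem \ref{thm44}. By Lemma \ref{lem41}, the truncation $({\bf\hat{x}}^T,{\bf\hat{u}}^T)=(\hat{x}_1,\ldots,\hat{x}_T,\hat{u}_0,\ldots,\hat{u}_T)$ solves the finite-horizon problem $({\bf EF}(\sigma))$, which in the compact form \eqref{eq43} reads: maximize $J^T$ subject to $g^T({\bf x}^T,{\bf u}^T)=0$ and $({\bf x}^T,{\bf u}^T)\in\prod_{t=1}^T X_t\times\prod_{t=0}^T U_t$.

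To apply Theorem \ref{thm44} I would take $\Xi:=X^T\times U^{T+1}$, $\hat{\xi}:=({\bf\hat{x}}^T,{\bf\hat{u}}^T)$, $Z:=X^{T+1}$, $H:=g^T$, and $\mathcal{I}:=-J^T$, so that maximizing $J^T$ becomes minimizing $\mathcal{I}$. Assumption (A3) gives the Fr\'echet differentiability of $\mathcal{I}$ and $H$ at $\hat{\xi}$, and Lemma \ref{lem43} (available under (A1), (A3), (A5)) provides the closed-range hypothesis on $DH(\hat{\xi})=Dg^T(\hat{\xi})$. For the abstract convex set I would pick, for each $t$, an open convex neighborhood $V_t\subset X_t$ of $\hat{x}_t$ (available by (A1)) and set $\hat{S}:=\prod_{t=1}^T V_t\times\prod_{t=0}^T U_t$; since $({\bf EF}(\sigma))$ carries no inequality constraint, I would take $Y:=\R$, $C:=[0,+\infty)$ and $\Gamma\equiv 0$, making the $\Gamma$-part vacuous. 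Theorem \ref{thm44} then produces $\lambda_0\ge 0$ and $\Lambda_2\in (X^{T+1})^*\cong (X^*)^{T+1}$, which I would write as $\Lambda_2=(q_1,\ldots,q_{T+1})$, with $(\lambda_0,q_1,\ldots,q_{T+1})\neq 0$. Setting $\lambda^{T}_0:=\lambda_0$ and $p^{T}_t:=-q_t$ (a sign flip chosen to match the conventions in (c) and (d)) delivers conclusions (a) and (b).

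Conditions (c) and (d) would then be read off from the variational inequality of Theorem \ref{thm44}, using the explicit formula for $Dg^T(\hat{\xi})$ established in the computation preceding Lemma \ref{lem42}. Pure state variations $\delta x_t\in X$ with $t\in\{1,\ldots,T\}$ are admissible in both directions because $V_t$ is open, so the variational inequality specializes to an equality, and collecting the coefficients of $\delta x_t$ contributed by $g^T_{t-1}$, $g^T_t$ and by $D_1\phi_t$ yields the adjoint relation (c). Pure control variations $u_t-\hat{u}_t$ with $u_t\in U_t$ are admissible (for a sufficiently small step) thanks to the convexity of $U_t$, and produce directly the variational inequality (d). The main obstacle will be the interior requirement on $\hat{S}$ in Theorem \ref{thm44}: when $U_t$ has empty interior in $U$, $\hat{S}$ itself has empty interior in $\Xi$, and one must either localize the application of Theorem \ref{thm44} to the state direction (where the open $V_t$ provide genuine interior) or reformulate the abstract constraint inside the affine hull of $\prod_t U_t-{\bf \hat{u}}^T$ to recover nonempty relative interior before invoking the theorem. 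This is precisely why the relative-interior hypothesis (A6) is not needed in Lemma \ref{lem45}; it will enter only in the passage to the infinite horizon, via Lemma \ref{lem33} and Proposition \ref{prop39}.
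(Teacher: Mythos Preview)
Your proposal is correct and follows essentially the same route as the paper: reduction to $({\bf EF}(\sigma))$ via Lemma~\ref{lem41}, application of Theorem~\ref{thm44} with the inequality part suppressed and $H=g^T$, closedness of $\mathrm{Im}\,Dg^T(\hat{\bf x}^T,\hat{\bf u}^T)$ from Lemma~\ref{lem43}, and then separating state variations (openness of $\prod X_t$ forces the equality (c)) from control variations (convexity of $U_t$ yields (d)). Your treatment is in fact more scrupulous than the paper's on two points---the explicit sign convention $p^T_t=-q_t$ coming from $\mathcal{I}=-J^T$, and the flagged difficulty with $\mathrm{Int}(\hat S)$ when $U_t$ has empty interior---both of which the paper's proof passes over silently.
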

\begin{proof} 
Using Lemma \ref{lem41}, we know that $({\bf \hat{x}}^T, {\bf \hat{u}}^T) = (\hat{x}_1,..., \hat{x}_T, \hat{u}_0,..., \hat{u}_T)$ is an optimal solution of (${\bf EF}(\sigma$)). We want to use Theorem \ref{thm44} where the inequality constraints are absent, and so we don't nee to the first assumption of Theorem \ref{thm44}, and among the conclusions we lost that the $p_t$ are positive. We have not inequality constraints and so we can delete $\Gamma$ and conditions on the cone $C$, and we have $H = g^T$. Using Lemma \ref{lem43}, we know that Im$Dg^T({\bf \hat{x}}^T, {\bf \hat{u}}^T)$ is closed in $X^{T+1}$. And so there exists $ \lambda_0 \in [0, + \infty)$ (that is the conclusion (ii)) and $\Lambda_2 \in (X^*)^{T+1}$ such $(\lambda_0 , \Lambda_2) \neq (0,0)$. Denoting by $p_t^{(T)}$ the coordinates of $\Lambda_2$ in $X^*$, we obtain the conclusion (i). From conclusion (ii) of Theorem \ref{thm44}, using the partial differentials with respect to ${\bf u}^T$ and with respect to ${\bf u}^T$, and using the openess of $\prod_{t=1}^T X_t$, we obatin
$$\lambda_0 D_1J^T({\bf \hat{x}}^T, {\bf \hat{u}}^T) +  \Lambda_2 \circ Dg^T_1({\bf \hat{x}}^T, {\bf \hat{u}}^T) = 0$$
$$\langle \lambda_0 D_2J^T({\bf \hat{x}}^T, {\bf \hat{u}}^T) +  \Lambda_2 \circ Dg^T_2({\bf \hat{x}}^T, {\bf \hat{u}}^T), {\bf u}^T - {\bf \hat{u}}^T \rangle \leq 0$$
for all ${\bf u}^T \in \prod_{t=0}^T U_t$. This gives the conclusions $(c)$ and $(d)$.
\end{proof}
\begin{lemma}\label{lem46}
Under (A1), (A3) and (A5), we assume moreover that $U_t$ is convex for all $t \in \N$ and that $Int(X_+) \neq \emptyset$. Then, for all $T \in \N$, $T \geq 2$, there exist $\lambda^{T}_0 \in \R$ and 
$(p^{T}_t)_{1 \leq t \leq T+1} \in (X^*)^{T+1}$ such that the following conditions hold.
\begin{enumerate}
\item[(a)] $\lambda^{T}_0$ and $(p^{T}_t)_{1 \leq t \leq T+1}$ are not simultaneously equal to zero.
\item[(b)] $\lambda^{T}_0 \geq 0$, and $p_t \geq 0$ for all $t \in \{1, ...,T+1 \}$.
\item[(c)] $p^{T}_t = p^{T}_{t+1} \circ D_1f_t(\hat{x}_t, \hat{u}_t) + \lambda^{T}_0 D_1 \phi_t(\hat{x}_t, \hat{u}_t)$  for all $t \in \{1,...,T \}$.
\item[(d)] $\langle \lambda^{T}_0 D_2\phi_t(\hat{x}_t, \hat{u}_t) + p^{T}_{t+1} \circ D_2f_t(\hat{x}_t, \hat{u}_t), u_t - \hat{u}_t \rangle \leq 0$ for all $t \in \{0,...,T \}$ and for all $u_t \in U_t$.
\end{enumerate}
\end{lemma}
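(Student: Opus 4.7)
The plan is to run exactly the scheme of Lemma \ref{lem45}, but this time invoking the full strength of Theorem \ref{thm44} with the inequality cone constraint in place of the equality. By Lemma \ref{lem41}, the restriction $({\bf \hat x}^T,{\bf \hat u}^T)=(\hat x_1,\dots,\hat x_T,\hat u_0,\dots,\hat u_T)$ is optimal for the finite-horizon problem $({\bf IF}(\sigma))$, i.e.\ for \eqref{eq44}. I would apply Theorem \ref{thm44} with $\Xi=X^T\times U^{T+1}$, $\hat\xi=({\bf \hat x}^T,{\bf \hat u}^T)$, $\mathcal I=-J^T$ (so that the maximization of $J^T$ becomes a minimization), $Y=X^{T+1}$ ordered by the product cone $C:=(X_+)^{T+1}$, $\Gamma=-g^T$ (so that the constraint $g^T\ge 0$ rephrases as $\Gamma(\xi)\in -C$), and $H\equiv 0$ so that $\mathrm{Im}\,DH(\hat\xi)=\{0\}$ is trivially closed. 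For $\hat S$ I would take a product of sufficiently small open balls $B(\hat x_t,r_t)\subset X_t$ (possible by (A1)) with the convex sets $U_t$; it is convex, contains $\hat\xi$, and is the natural analogue of the set implicitly used in Lemma \ref{lem45}.

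The key point is that the hypothesis $\mathrm{Int}(X_+)\ne\emptyset$ forces $\mathrm{Int}(C)\ne\emptyset$, which is precisely assumption (1) of Theorem \ref{thm44}. Differentiability of $\mathcal I$ and $\Gamma$ at $\hat\xi$ follows from (A3); closedness of $\mathrm{Im}\,DH(\hat\xi)$ is automatic. Theorem \ref{thm44} then produces $\lambda_0^T\in[0,+\infty)$ and a positive functional $\Lambda_1\in(X^{T+1})^*$, not both zero. Writing $\Lambda_1=(p_1^T,\dots,p_{T+1}^T)$, the dual of the product cone $(X_+)^{T+1}$ is $(X_+^*)^{T+1}$, so positivity of $\Lambda_1$ on $C$ is equivalent to $p_t^T\ge 0$ for each $t$, which is the new content of (b); item (a) is the non-triviality assertion of Theorem \ref{thm44}.

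For (c) and (d), I would now unfold the Lagrangian inequality $\langle \lambda_0^T D\mathcal I(\hat\xi)+\Lambda_1\circ D\Gamma(\hat\xi),\xi-\hat\xi\rangle\le 0$ exactly as in the last paragraph of the proof of Lemma \ref{lem45}. Writing $\xi-\hat\xi=(\delta{\bf x}^T,\delta{\bf u}^T)$ and using that each $\hat x_t$ lies in the open ball $B(\hat x_t,r_t)$, one may take $\delta{\bf x}^T\in X^T$ of either sign, which turns the inequality into an equality in the $x$-components; computing $\partial_{x_t}\bigl(\sum_{s}p_{s+1}^T\!\cdot g^T_s\bigr)$ using the formulas displayed in the excerpt just before Lemma \ref{lem42} produces the adjoint recursion (c). Taking $\delta{\bf u}^T=({\bf u}^T-{\bf \hat u}^T)$ with ${\bf u}^T\in\prod_{t=0}^T U_t$ (valid because each $U_t$ is convex), the remaining inequality collapses, coordinate by coordinate, to the variational inequality (d).

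The only genuinely delicate point above the routine bookkeeping is the extraction of positivity of $\Lambda_1$ and its identification with a family of positive functionals $p_t^T$, which is precisely what the cone assumption $\mathrm{Int}(X_+)\ne\emptyset$ is designed to deliver via Theorem \ref{thm44}; everything else is parallel to Lemma \ref{lem45}.
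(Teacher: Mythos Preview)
Your proof is correct and follows exactly the paper's approach: the paper's one-line proof of Lemma \ref{lem46} says simply to proceed as in Lemma \ref{lem45} ``without deleting the inequality constraints, but deleting the equality constraints,'' which is precisely what you do by applying Theorem \ref{thm44} with $\Gamma=-g^T$, the product cone $C=(X_+)^{T+1}$, and $H\equiv 0$. Your explicit identification of the positivity of $\Lambda_1$ with componentwise positivity $p_t^T\ge 0$, and your unwinding of (c)--(d) from the Lagrangian inequality, merely spell out what the paper leaves implicit by the reference back to Lemma \ref{lem45}.
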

\begin{proof} We procced as in the proof of Lemma \ref{lem45} without deleting the inequality constraints, but deleting the equality constraints.
\end{proof}
We need the following lemma for the proof of our main result Theorem \ref{thm22}.
\begin{lemma}\label{lem47} Under the assumptions of Lemma \ref{lem45} or Lemma \ref{lem46}, suppose moreover that (A4) is satisfied.  Then, for all $T \in \N$, $T \geq 2$, there exist $\lambda^{T}_0 \in \R$ and 
$(p^{T}_t)_{1 \leq t \leq T+1} \in (X^*)^{T+1}$ such that the following conditions hold.
\begin{itemize}
\item[(1)] For all $T\geq 2$, for all $s \in \{1,...,T \}$ and all $1 \leq t \leq T+1$, there exists $a_t, b_t\geq 0$ such that $ \|p_{t}^T\| \leq a_t \lambda_0^T+b_t\|p_s^T\|$.
\item[(2)] For all $s \in \{1,...,T \}$, $(\lambda_0^T,p^T_s) \neq 0$.
\item[(3)] For all $s \in \{1,...,T \}$, for all $z\in A_s:=D_2 f_{s-1}(\hat{x}_{s-1}, \hat{u}_{s-1})(T_{U_{s-1}}(\hat{u}_{s-1}))$, there exists $C_z\in \R$ such that: $ \forall T\geq 2, \hspace{3mm} p^T_{s}(z) \leq C_z \lambda_0^T$.
\end{itemize}
\end{lemma}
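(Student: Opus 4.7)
The plan is to take the multipliers $(\lambda_0^T,(p_t^T)_{1\leq t\leq T+1})$ produced by Lemma~\ref{lem45} (respectively Lemma~\ref{lem46}), establish the norm comparison (1) using (c), (d) together with (A4), and then deduce (2) and (3) with little extra work. Throughout, fix $s\in\{1,\dots,T\}$.

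The backward half of (1), when $t\leq s$, is routine: the adjoint identity (c) gives
$$\|p_t^T\|\leq \|D_1f_t(\hat x_t,\hat u_t)\|\cdot\|p_{t+1}^T\|+\lambda_0^T\,\|D_1\phi_t(\hat x_t,\hat u_t)\|,$$
and iterating from $s$ down to $t$ yields constants $a_t,b_t\geq 0$ (depending on $t$ and $s$ but not on $T$) of the required form.

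The forward half of (1), when $t>s$, is the main step and the only place where (A4) is used essentially. Fix $t\geq 1$, write $M_t:=\|D_1\phi_t(\hat x_t,\hat u_t)\|+\|D_2\phi_t(\hat x_t,\hat u_t)\|$, and take $(\delta x,v)\in(X\times T_{U_t}(\hat u_t))\cap B_{X\times U}$. Substituting (c) into $p_{t+1}^T(D_1f_t(\hat x_t,\hat u_t)\delta x)$ and extending (d) to the tangent cone by continuity and positive homogeneity yields
$$p_{t+1}^T\bigl(Df_t(\hat x_t,\hat u_t)(\delta x,v)\bigr)=p_t^T(\delta x)-\lambda_0^T D_1\phi_t(\hat x_t,\hat u_t)\delta x+p_{t+1}^T\bigl(D_2f_t(\hat x_t,\hat u_t)v\bigr)\leq \|p_t^T\|+\lambda_0^T M_t,$$
where the last step uses (d) to bound $p_{t+1}^T(D_2f_t(\hat x_t,\hat u_t)v)\leq -\lambda_0^T D_2\phi_t(\hat x_t,\hat u_t)v$. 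Condition (A4) provides $\rho_t>0$ with $B_X(0,\rho_t)\subset Df_t(\hat x_t,\hat u_t)\bigl((X\times T_{U_t}(\hat u_t))\cap B_{X\times U}\bigr)$, so every $y\in B_X(0,\rho_t)$ and its negative are images of some admissible $(\delta x,v)$. Applying the above estimate twice gives $|p_{t+1}^T(y)|\leq \|p_t^T\|+\lambda_0^T M_t$ and thus $\|p_{t+1}^T\|\leq \rho_t^{-1}\bigl(\|p_t^T\|+\lambda_0^T M_t\bigr)$. Iterating from $s$ up to $t$ completes (1).

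Properties (2) and (3) are now formal. For (2), if $(\lambda_0^T,p_s^T)=(0,0)$ then (1) forces $p_t^T=0$ for every $t\in\{1,\dots,T+1\}$, contradicting the non-triviality condition (a) of Lemma~\ref{lem45} or Lemma~\ref{lem46}. For (3), given $z\in A_s$, choose $v\in T_{U_{s-1}}(\hat u_{s-1})$ with $D_2f_{s-1}(\hat x_{s-1},\hat u_{s-1})v=z$ and set $C_z:=-D_2\phi_{s-1}(\hat x_{s-1},\hat u_{s-1})v$; condition (d) at index $t=s-1$, extended to the tangent cone, gives $p_s^T(z)\leq C_z\lambda_0^T$ uniformly in $T$.

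The principal obstacle is the forward step of (1): from (c) alone one only controls the composition $p_{t+1}^T\circ D_1f_t(\hat x_t,\hat u_t)$, so without the partial surjectivity provided by (A4) one cannot bound $\|p_{t+1}^T\|$ itself. Combining (A4) with the variational inequality (d) is exactly the mechanism that lets information on $p_t^T$ propagate to $p_{t+1}^T$ with estimates uniform in $T$.
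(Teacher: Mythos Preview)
Your proof is correct and follows essentially the same approach as the paper: the backward estimate via (c), the forward estimate via the combination of (c), (d) and (A4), the contradiction argument for (2), and the use of (d) at $t=s-1$ for (3) are all exactly what the paper does. The only cosmetic differences are your explicit split into backward/forward halves and the remark about applying the estimate to $\pm y$, which the paper leaves implicit since $B_X(0,\rho_t)$ is already symmetric.
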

\begin{proof} By adding $(c)$ and $(d)$ of Lemma \ref{lem45} (respectively Lemma \ref{lem46}) we obtain for all $t \in \{1,...,T \}$, for all $h\in X$ and for all $u_t \in U_t$ 
\[
\begin{array}{cl}
\null &\langle p^{T}_{t+1}, D_1f_t(\hat{x}_t, \hat{u}_t)(h) +  D_2f_t(\hat{x}_t, \hat{u}_t) \cdot (u_t - \hat{u}_t)\rangle \\
 \null & +\lambda^{T}_0.[D_1 \phi_t(\hat{x}_t, \hat{u}_t)(h) + D_2\phi_t(\hat{x}_t, \hat{u}_t) \cdot ( u_t - \hat{u}_t )]\\
\leq & p^{T}_t(h).
\end{array}
\]
Equivalently, for all $t \in \{1,...,T \}$ and for all $(h, k)\in X\times T_{U_t}(\hat{u}_t)$
\begin{equation} \label{eq48}
\langle p^{T}_{t+1}, Df_t(\hat{x}_t, \hat{u}_t) \cdot (h,k)\rangle \leq p^{T}_t(h)- \lambda^{T}_0 D \phi_t(\hat{x}_t, \hat{u}_t)(h,k).
\end{equation}
Thus we get for all $t \in \{1,...,T \}$ and for all $(h, k)\in X\times T_{U_t}(\hat{u}_t)$
\begin{equation} \label{eq49}
\langle p^{T}_{t+1}, Df_t(\hat{x}_t, \hat{u}_t) \cdot (h,k)\rangle \leq \|p^{T}_t\|\|h\|_X + \lambda^{T}_0 \|D \phi_t(\hat{x}_t, \hat{u}_t)\| \cdot \|(h,k)\|_{X\times U}.
\end{equation}
Since, for all $t \in \N_*$, $0\in Int\left(Df_t(\hat{x}_t, \hat{u}_t)((X\times T_{U_t}(\hat{u}_t))\cap B_{X\times U})\right)$, there exists a constant $r_t >0$ such that $B_{X}(0,r_t)\subset Df_t(\hat{x}_t, \hat{u}_t)((X\times T_{U_t}(\hat{u}_t))\cap B_{X\times U})$. Thus, from $(\ref{eq49})$ we obtain 
\begin{equation} \label{eq410}
\|p^{T}_{t+1}\|\leq \frac{1}{r_t}(\|p^{T}_t\|+ \lambda^{T}_0\|D \phi_t(\hat{x}_t, \hat{u}_t)\|).
\end{equation}
On the other hand, using $(c)$ of Lemma \ref{lem45} (respectively Lemma \ref{lem46}), we get, for all $t \in \lbrace 1,...,T \rbrace$, 
\begin{equation} \label{eq411}
\|p^{T}_t \| \leq \|p^{T}_{t+1}\| \cdot  \|D_1f_t(\hat{x}_t, \hat{u}_t)\| + \lambda^{T}_0 \|D_1 \phi_t(\hat{x}_t, \hat{u}_t)|\|.  
\end{equation}
Thus, by combining $(\ref{eq410})$ and  $(\ref{eq411})$ for all $T\geq 2$, for all $s \in \{1,...,T \}$, and all $1 \leq t \leq T+1$, there exist $a_t, b_t\geq 0$ such that 
$$ \|p_{t}^T\|\leq a_t \lambda_0^T+b_t\|p_s^T\|.$$
This gives the part $(1)$. Suppose that there exists $s \in \{1,...,T \}$ such that $(\lambda_0^T,p^T_s)=(0,0)$. Using the above inequality we obtain that $\lambda^{T}_0$ and $(p^{T}_t)_{1 \leq t \leq T+1}$ are simultaneously equal to zero which contredicts the part $(a)$ of Lemma \ref{lem45} (respectively Lemma \ref{lem46}). Thus, $(\lambda_0^T,p^T_s)\neq (0,0)$ which gives the part $(2)$. 
\vskip1mm
Now, using $(d)$ of Lemma \ref{lem45} (respectively Lemma \ref{lem46}) for an arbitrary $s \in \{1,...,T \}$, for all $T\geq 2$, and for all $u_s \in U_s$, we have
$$\langle p^{T}_{s} \circ D_2f_{s-1}(\hat{x}_{s-1}, \hat{u}_{s-1}), u_{s-1} - \hat{u}_{s-1} \rangle \leq -\langle \lambda^{T}_0 D_2\phi_{s-1}(\hat{x}_{s-1}, \hat{u}_{s-1}), u_{s-1} - \hat{u}_{s-1} \rangle.$$
For all $z\in A_s:=D_2 f_{s-1}(\hat{x}_{s-1}, \hat{u}_{s-1})(T_{U_{s-1}}(\hat{u}_{s-1}))$, using the definition of the set $T_{U_{s-1}}(\hat{u}_{s-1})$, there exist $(u^{y_k}_{s-1})_k \in U_{s-1}^{\N}$ and $(\alpha_k)_k \in (\R^+)^{\N}$ such that $y_z:=\lim_{k \rightarrow + \infty} (\alpha_k (u^{y_k}_{s-1}-\hat{u}_{s-1}))$ and $z=D_2 f_{s-1}(\hat{x}_{s-1}, \hat{u}_{s-1}) \cdot y_z$. So, using the above inequality and doing $k \rightarrow + \infty$, we get 
$$p^{T}_{s}(z) \leq -\langle \lambda^{T}_0 D_2\phi_{s-1}(\hat{x}_{s-1}, \hat{u}_{s-1}), y_z \rangle,$$ 
and so there exists 
$$C_z:= -\langle D_2\phi_{s-1}(\hat{x}_{s-1}, \hat{u}_{s-1}), y_z \rangle$$
such that, for all $T\geq 2$, we have $p^T_{s}(z) \leq C_z \lambda_0^T$. This gives the part $(3)$.
\end{proof}
%%%%%%%%%%%%%%%%%%%%%%%%%%%%%%%%%%%%%%%%%%%%%%%%%%%%
\section{The proof of the main results.}
%%%%%%%%%%%%%%%%%%%%%%%%%%%%%%%%%%%%%%%%%%%%%%%%%%%
%%%%%%%%%%%%%%%%%%%%%%%%%%%%%%%%%%%%%%%%%%%%%%%%%%%%
%%%%%%%%%%%%%%%%%%%%%%%%%%%%%%%%%%%%%%%%%%%%%%%%%
This section is devoted to the proofs of the Pontryagin principle for systems governed by a difference equation, and of the Pontryagin principle for systems governed by a difference inequation%
\begin{proof}[\bf Proof of Theorem \ref{thm22}] Let us prove the existence of the sequences $(p_t)_{t \in \N_*} \in (X^*)^{\N_*}$ and $\lambda_0 \geq0$ satisfying the theorem. By Lemma \ref{lem45}, for all $T \in \N$, $T \geq 2$, there exist $\lambda^{T}_0 \in \R$ and $(p^{T}_t)_{1 \leq t \leq T+1} \in (X^*)^{T+1}$ such that the following conditions hold.
\begin{enumerate}
\item[(a)] $\lambda^{T}_0$ and $(p^{T}_t)_{1 \leq t \leq T+1}$ are not simultaneously equal to zero.
\item[(b)] $\lambda^{T}_0 \geq 0$.
\item[(c)] $p^{T}_t = p^{T}_{t+1} \circ D_1f_t(\hat{x}_t, \hat{u}_t) + \lambda^{T}_0 D_1 \phi_t(\hat{x}_t, \hat{u}_t)$  for all $t \in \{1,...,T \}$.
\item[(d)] $\langle \lambda^{T}_0.D_2\phi_t(\hat{x}_t, \hat{u}_t) + p^{T}_{t+1} \circ D_2f_t(\hat{x}_t, \hat{u}_t), u_t - \hat{u}_t \rangle \leq 0$ for all $t \in \{0,...,T \}$ and for all $u_t \in U_t$.
\end{enumerate}
From $(A6)$, there exist $s\in \N$ such that the set $A_s:=D_2f_s(\hat{x}_s,\hat{u}_s)(T_{U_s}(\hat{u}_s))$ contains a closed convex subset $K$ with $\textnormal{ri}(K)\neq \emptyset$ and $\overline{\textnormal{Aff}(K)}$ is of finite codimension in $X$. Since the set of the lists of multipliers of a maximization problem is a cone, using the above consequences of Lemma \ref{lem45}, we can normalize the pair $(\lambda^{T}_0,p_s^{T})\neq (0,0)$ and so we can assume that $\lambda^{T}_0 + \Vert p_s^{T} \Vert_{X^*} = 1$. By combining Lemma \ref{lem47} and Proposition \ref{prop39} applied with $K$, we get a strictly increasing map $k\mapsto T_k$, from $\N$ into $\N$, and $\lambda_0\in \R^+$ and $(p_t)_{t\geq 1}\in (X^*)^{\N}$ such that: 
\begin{itemize}
\item[(i)] $\lambda^{T_k}_0\longrightarrow \lambda_0\geq 0$ when $k\rightarrow +\infty$,
\item[(ii)] for each $t \in \N$, $p^{T_k}_t \stackrel{w^*}{\longrightarrow} p_t$ when $k\rightarrow +\infty$,
\item[(iii)] $(\lambda_0,p_s)\neq (0,0).$
\end{itemize}
Thus, by doing $k \rightarrow + \infty$ in $(c)$ and $(d)$ we obtain $(3)$ and $(4)$. From $(i)$ we get $(2)$. Now, if there exists $t> s$ such that $(\lambda_0, p_t)=(0,0)$, we proceed recursively using $(3)$ to obtain that $(\lambda_0, p_s)=(0,0)$ which is a contradiction with $(iii)$. Thus, for all $t\geq s$, $(\lambda_0, p_t)\neq (0,0)$ this gives the part $(1)$. 
\end{proof}
\begin{proof}[\bf Proof of Theorem \ref{thm23}] We proceed as in the proof of Theorem \ref{thm22}, replacing the use of Lemma \ref{lem45} by the use of Lemma \ref{lem46}.
\end{proof}
%%%%%%%%%%%%%%%%%%%%%%%%%%%%%%%%%%%%%%
%%%%%%%%%%%%%%%%%%%%%%%%%%%%%%%%%%%%%%%
%%%%%%%%%%%%%%%%%%%%%%%%%%%%%%%%%%%%%%%
\section{Appendix: Some additional applications.} \label{Section 6}
%%%%%%%%%%%%%%%%%%%%%%%%%%%%%%%%%%%%%%%
%%%%%%%%%%%%%%%%%%%%%%%%%%%%%%%%%%%%%%%
%%%%%%%%%%%%%%%%%%%%%%%%%%%%%%%%%%%%%%%
In this section we establish some additional consequences of the abstract result (Lemma \ref{lem33}). We begin by the following extension of [Theorem 2.5.4 \cite{HP}]. The [Theorem 2.5.4 \cite{HP}] can be obtained by taking $K=Z$, $a=0$ and $B=B_Z(0,1)$ in Proposition \ref{prop61}.
\begin{proposition}\label{prop61} Let $Z$ be a Banach space, $\mathcal{T}$ be any nonempty set and $(p_n)_{n\in \mathcal{T}}$ be a collection of lower semicontinuous and subadditive functions from $Z$ into $\R$. Let $K$ be a closed convex subset of $Z$ such that $\textnormal{ri}(K)\neq \emptyset$. Suppose that for each $z\in K$ we have $\sup_{n\in \mathcal{T}} p_n(z) < +\infty$. Then, for each $a\in K$ and each bounded subset $B$ of $\overline{\textnormal{Aff}(K)}$, we have 
$$\sup_{n\in \mathcal{T}} \sup_{z\in B} p_n(z-a)< +\infty.$$
\end{proposition}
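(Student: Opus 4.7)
The plan is to derive Proposition \ref{prop61} as an immediate corollary of Lemma \ref{lem33} by specializing the collection of scalars $(\lambda_n)_{n \in \mathcal{T}}$ to be identically equal to $1$. All the structural hypotheses of Lemma \ref{lem33} are already in place: $Z$ is a Banach space, $K$ is closed convex with $\textnormal{ri}(K) \neq \emptyset$, and the family $(p_n)_{n \in \mathcal{T}}$ consists of lower semicontinuous subadditive functions on $Z$. With $\lambda_n \equiv 1$, the pointwise boundedness assumption ``$\sup_{n \in \mathcal{T}} p_n(z) < +\infty$ for each $z \in K$'' supplies exactly the constants $C_z := \sup_{n \in \mathcal{T}} p_n(z) \in \R$ required by the lemma, since then trivially $p_n(z) \leq C_z = C_z \lambda_n$ for every $n \in \mathcal{T}$.

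Fixing $a \in K$ and a bounded subset $B$ of $\overline{\textnormal{Aff}(K)}$, Lemma \ref{lem33} produces a point $b_a \in \textnormal{Aff}(K)$ (depending on $a$ only) and a constant $R_B \geq 0$ such that
\[
\sup_{h \in B} p_n(h-a) \;\leq\; R_B \, \bigl( 1 + p_n(b_a - a) \bigr) \qquad \text{for every } n \in \mathcal{T}.
\]
The conclusion of the proposition then follows by passing to the supremum over $n \in \mathcal{T}$ on both sides, provided that we establish the finiteness $\sup_{n \in \mathcal{T}} p_n(b_a - a) < +\infty$.

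This uniform bound on $p_n(b_a - a)$ is the one substantive step beyond quoting Lemma \ref{lem33}, and it is the main obstacle I expect to grapple with. The plan is to reread the proof of Lemma \ref{lem33}: the point $b_a - a$ arises as a fixed negative scalar multiple $-b/m_0$ of a point $b \in F_{m_0} \subset K$, for which the Baire-category selection already delivers the uniform estimate $p_n(b) \leq m_0$ valid for every $n \in \mathcal{T}$; subadditivity of $p_n$ together with this bound then transfers into a uniform control on $p_n(b_a - a)$. Inserting this into the displayed inequality and taking the supremum over $n$ produces the desired conclusion $\sup_{n \in \mathcal{T}} \sup_{z \in B} p_n(z-a) < +\infty$.
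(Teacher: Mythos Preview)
Your overall strategy---apply Lemma \ref{lem33} with $\lambda_n \equiv 1$---is exactly the paper's, and you are right that the paper's one-line proof tacitly requires the additional bound $\sup_{n\in\mathcal{T}} p_n(b_a-a) < +\infty$ in order to pass from
\[
\sup_{h\in B} p_n(h-a)\;\le\;R_B\bigl(1+p_n(b_a-a)\bigr)
\]
to the finiteness of $\sup_{n}\sup_{h\in B} p_n(h-a)$. Identifying this gap is the correct diagnosis.

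However, your proposed fix does not work. From the proof of Lemma \ref{lem33} one has $b_a-a=-b/m_0$ with $b\in F_{m_0}\subset K$, and the Baire step gives only $p_n(b)\le m_0$ for all $n$. Subadditivity yields inequalities in the \emph{wrong direction}: for instance $p_n(0)\le p_n(b)+p_n(-b)$ gives a \emph{lower} bound $p_n(-b)\ge p_n(0)-m_0$, and $p_n(-b)\le m_0\,p_n(-b/m_0)$ likewise bounds $p_n(-b/m_0)$ from \emph{below}. There is no way to extract an upper bound on $p_n(-b/m_0)$ from an upper bound on $p_n(b)$ using subadditivity alone. Concretely, take $Z=\R$, $\mathcal{T}=\N_*$, $p_n(x)=nx$ (linear, hence subadditive and continuous), and $K=[-1,0]$. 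Then $\textnormal{ri}(K)=(-1,0)\neq\emptyset$ and $\sup_n p_n(z)\le 0$ for every $z\in K$, yet with $a=0$ and $B=\{1\}\subset\R=\overline{\textnormal{Aff}(K)}$ one gets $\sup_n p_n(1-0)=\sup_n n=+\infty$. So the statement as written is actually false in this generality, and no argument based solely on Lemma \ref{lem33} and subadditivity can close the gap.

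The applications in the paper (Corollaries \ref{cor62}--\ref{cor63} and Proposition \ref{prop64}) all take $K=Z$, and in that case the missing bound is immediate from the hypothesis itself, since $b_a-a\in Z=K$. Your write-up would be correct if you either add this restriction or an extra hypothesis (e.g.\ $p_n(-x)\le p_n(x)$, as holds for $p_n=\|T_n(\cdot)\|$) that genuinely controls $p_n(b_a-a)$.
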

\begin{proof} The proof is immediat by using Lemma \ref{lem33} with $\lambda_n=1$ for all $n\in \mathcal{T}$.
\end{proof}
\vskip3mm
The above proposition is in fact an extention to subadditive functions of the classical Banach-Steinauss theorem. 
\begin{corollary}\label{cor62} (Banach-Steinauss) Let $X$ be a Banach space and $Y$ be a normed vector space. Let $\mathcal{T}$ be any nonempty set. Suppose that $(T_n)_{n\in \mathcal{T}}$ is a collection of continuous linear operators from $X$ to $Y$.  Suppose that for each $x\in X$ one has
$$\sup_{n\in \mathcal{T}} \|T_n(x)\|_Y < +\infty,$$ 
then
$$\sup_{n\in \mathcal{T}}\sup_{\|x\|=1} \|T_n(x)\|_Y=\sup_{n\in \mathcal{T}} \|T_n\|_{B(X,Y)} < +\infty.$$
\end{corollary}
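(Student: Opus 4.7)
The plan is to obtain Corollary \ref{cor62} as a direct specialization of Proposition \ref{prop61}. I would set $Z := X$ and, for each $n \in \mathcal{T}$, define the function $p_n : X \to \R$ by $p_n(x) := \|T_n(x)\|_Y$. The linearity of $T_n$ together with the triangle inequality in $Y$ gives subadditivity of $p_n$: indeed, $p_n(x+y) = \|T_n(x) + T_n(y)\|_Y \leq \|T_n(x)\|_Y + \|T_n(y)\|_Y = p_n(x) + p_n(y)$. Moreover, since $T_n$ is continuous and $\|\cdot\|_Y$ is continuous, $p_n$ is continuous and in particular lower semicontinuous.

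Next I would take $K := X$, which is trivially a closed convex subset of $Z = X$ whose relative interior is $X$ itself, hence nonempty, and for which $\overline{\textnormal{Aff}(K)} = X$. The pointwise boundedness hypothesis $\sup_{n \in \mathcal{T}} \|T_n(x)\|_Y < +\infty$ is exactly $\sup_{n \in \mathcal{T}} p_n(x) < +\infty$ for every $x \in K$, so the hypotheses of Proposition \ref{prop61} are met.

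Applying Proposition \ref{prop61} with $a = 0 \in K$ and $B := B_X(0,1) = \{x \in X : \|x\| \leq 1\}$, which is a bounded subset of $\overline{\textnormal{Aff}(K)} = X$, I get
\[
\sup_{n \in \mathcal{T}} \sup_{x \in B_X(0,1)} \|T_n(x)\|_Y = \sup_{n \in \mathcal{T}} \sup_{x \in B_X(0,1)} p_n(x - 0) < +\infty.
\]
Since $\sup_{\|x\| \leq 1} \|T_n(x)\|_Y = \|T_n\|_{B(X,Y)}$, this is the desired conclusion. The equality between $\sup_{\|x\|=1}\|T_n(x)\|_Y$ and $\sup_{\|x\|\leq 1}\|T_n(x)\|_Y$ follows from the homogeneity of $T_n$ in the standard way.

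There is essentially no obstacle here: everything is a matter of correctly identifying the data to feed into Proposition \ref{prop61}. The only point worth a brief check is the subadditivity and lower semicontinuity of $p_n$, which are immediate from the linearity and continuity of $T_n$ respectively. Thus the classical Banach--Steinhaus theorem is recovered as a genuine special case of the subadditive version in Proposition \ref{prop61}.
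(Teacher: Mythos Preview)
Your proof is correct and matches the paper's own argument essentially verbatim: the paper also applies Proposition \ref{prop61} with $K=X$, $a=0$, and $p_n(x)=\|T_n(x)\|_Y$, the only cosmetic difference being that it takes $B=S_X$ (the unit sphere) rather than the closed unit ball.
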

\begin{proof} The proof follows immediately from Proposition \ref{prop61} applied with: $K=X$, $a=0$, the bounded set $S_X$ (the unit sphere of $X$) and with the collection of the continuous subadditive functions $p_n(x):=\|T_n(x)\|_Y$ for all $n\in \mathcal{T}$ and all $x\in X$.  
\end{proof}
We also have the following corollary.
\begin{corollary}\label{cor63} Let $A$ be a nonempty set and $(Z,\|.\|)$ be a Banach space. Let $\varphi: A\times Z \longrightarrow \R$ be a map such that:
\begin{itemize}
\item[(1)] For all $x\in A$, the map $z\mapsto \varphi(x,z)$ is lower semicontinuous and sublinear.
\item[(2)] For all $z\in Z$, the map $x\mapsto \varphi(x,z)$ is bounded.
\end{itemize}
\vskip2mm
Then, there exists a real number $C\in \R$ such that $\sup_{x\in A}\varphi(x,z)\leq C \|z\|$, for all $z\in Z$.
\end{corollary}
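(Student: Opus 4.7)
The statement is a natural sublinear-functions analogue of Banach–Steinhaus, and the strategy is to deduce it directly from Proposition \ref{prop61} (whose full power we already have at our disposal).

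First I would set the indexing: take $\mathcal{T} := A$ and, for each $x \in A$, define $p_x : Z \to \R$ by $p_x(z) := \varphi(x,z)$. By hypothesis (1), each $p_x$ is lower semicontinuous and sublinear, hence in particular subadditive, so $(p_x)_{x \in A}$ is a collection of the sort covered by Proposition \ref{prop61}. Next I would choose $K := Z$, which is a closed convex subset of $Z$ with $\textnormal{ri}(K) = Z \neq \emptyset$, so that $\overline{\textnormal{Aff}(K)} = Z$. Hypothesis (2) gives exactly that $\sup_{x \in \mathcal{T}} p_x(z) < +\infty$ for every $z \in K$, which is the pointwise-boundedness assumption of Proposition \ref{prop61}.

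Then I would apply Proposition \ref{prop61} with $a = 0$ and $B := B_Z(0,1)$, the closed unit ball of $Z$, which is a bounded subset of $\overline{\textnormal{Aff}(K)} = Z$. The conclusion is
$$C := \sup_{x \in A}\, \sup_{z \in B_Z(0,1)} \varphi(x,z) < +\infty.$$
Without loss of generality I take $C \geq 0$ (by enlarging it if necessary), and I note that sublinearity of $p_x$ gives $p_x(0) = 0$, so $\varphi(x,0) = 0 \le C \cdot \|0\|$.

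Finally, for any $z \in Z \setminus \{0\}$ and any $x \in A$, the positive homogeneity part of sublinearity yields
$$\varphi(x,z) \;=\; \|z\| \cdot \varphi\!\left(x,\tfrac{z}{\|z\|}\right) \;\leq\; \|z\| \cdot C,$$
since $z/\|z\| \in B_Z(0,1)$. Taking the supremum over $x \in A$ gives the desired inequality $\sup_{x \in A} \varphi(x,z) \le C\|z\|$ for all $z \in Z$. The only genuine work in this argument was done inside Lemma \ref{lem33}/Proposition \ref{prop61} via the Baire category argument on the sets $F_m$; once that machinery is in hand, the present corollary is just the specialization to $\lambda_n = 1$, $K = Z$, and $B$ the unit ball, together with a one-line homogeneity rescaling. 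Accordingly there is no serious obstacle here—the main point to be careful about is merely to verify that the sublinearity (not just subadditivity) is what permits the final rescaling step from the unit ball to all of $Z$.
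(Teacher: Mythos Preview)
Your proof is correct and follows essentially the same route as the paper's: apply Proposition~\ref{prop61} with $\mathcal{T}=A$, $p_x=\varphi(x,\cdot)$, $K=Z$, $a=0$, and a bounded set at unit scale, then use positive homogeneity to pass from the unit ball (the paper uses the unit sphere) to all of $Z$. Your version is slightly more careful in separating out the case $z=0$ and in ensuring $C\ge 0$, but the approach is identical.
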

\begin{proof} We apply Proposition \ref{prop61} with $\mathcal{T}=A$, $p_x:=\varphi(x,.)$, using $(1)$ and $(2)$, there exists $C\in \R$ such that 
$\sup_{x\in A}\sup_{\|z\|=1} \varphi(x,z)\leq C.$
Thus, by the homogeneity of $p_x$, we have $\varphi(x,z)\leq C \|z\|$, for all $x\in A$ and all $z\in Z$.
\end{proof}
Finally, we get the following proposition, which gives, a necessary and sufficient condition such that the Dirac masses are continuous functionals.
\begin{proposition}\label{prop64}
Let $X$ be a nonempty set and $(\mathcal{B}(X),\|.\|_{\infty})$ be the Banach space of all bounded real-valued functions. Let $Y\subset \mathcal{B}(X)$ be a subspace and $\|.\|_Y$ be a norm on $Y$ such that $(Y,\|.\|_Y)$ is a Banach space. Let us denote by $\delta_x$ the Dirac mass or the evaluation at $x\in X$ defined by $\delta_x : f\mapsto f(x)$ for all $f\in \mathcal{B}(X)$. Then, the following assertions are equivalent. 

$(a)$ $\delta_x : (Y,\|.\|_Y)\longrightarrow \R$ is continuous for each $x\in X$, 

$(b)$ there exists a constant $\alpha\in \R^{+*}$ such that $\|.\|_Y\geq \alpha \|.\|_{\infty}$.
\end{proposition}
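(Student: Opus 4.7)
The plan is to prove the equivalence by treating the easy direction directly and the nontrivial direction via the Banach--Steinhaus principle (Corollary \ref{cor62}), which is exactly the kind of uniform bound statement that the previous material makes available.

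First I would dispatch $(b) \Rightarrow (a)$ in one line: if $\|\cdot\|_Y \geq \alpha \|\cdot\|_\infty$, then for every $x \in X$ and every $f \in Y$,
\[
|\delta_x(f)| = |f(x)| \leq \|f\|_\infty \leq \alpha^{-1} \|f\|_Y,
\]
so each $\delta_x$ is a continuous linear functional on $(Y, \|\cdot\|_Y)$ with norm at most $\alpha^{-1}$.

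For the main implication $(a) \Rightarrow (b)$, the key observation is that the family $\{\delta_x\}_{x \in X}$ is pointwise bounded on $Y$: for each fixed $f \in Y \subset \mathcal{B}(X)$, the function $f$ is bounded, so
\[
\sup_{x \in X} |\delta_x(f)| = \sup_{x \in X} |f(x)| = \|f\|_\infty < +\infty.
\]
Since by $(a)$ each $\delta_x$ belongs to the dual $(Y, \|\cdot\|_Y)^*$ and $(Y, \|\cdot\|_Y)$ is a Banach space, the Banach--Steinhaus theorem (Corollary \ref{cor62}, applied with $\mathcal{T} = X$ and the collection $T_x := \delta_x$) yields
\[
C := \sup_{x \in X} \|\delta_x\|_{Y^*} < +\infty.
\]
Unwinding the supremum gives $|f(x)| \leq C \|f\|_Y$ for every $x \in X$ and $f \in Y$, hence $\|f\|_\infty \leq C\|f\|_Y$. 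If $C > 0$, setting $\alpha := 1/C$ finishes the proof; the degenerate case $C = 0$ forces $Y = \{0\}$, for which (b) holds with any $\alpha > 0$.

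There is no serious obstacle: the proposition is essentially a uniform boundedness statement in disguise, and its proof is a direct application of Corollary \ref{cor62}, whose hypotheses are met precisely because elements of $Y$ are bounded functions and $(Y,\|\cdot\|_Y)$ is complete. The only small point to flag is that the pointwise boundedness uses $Y \subset \mathcal{B}(X)$ (not merely $Y$ being a space of real-valued functions), which is why the embedding into $\mathcal{B}(X)$ appears in the hypotheses.
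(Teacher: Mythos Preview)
Your proof is correct and follows essentially the same approach as the paper: both directions are handled identically, and the nontrivial implication $(a)\Rightarrow(b)$ is obtained from the uniform boundedness machinery built up in Section~\ref{Section 6}. The only cosmetic difference is that the paper invokes Corollary~\ref{cor63} (applied to $\varphi(x,f)=f(x)$, using sublinearity in $f$), whereas you invoke Corollary~\ref{cor62} directly on the linear functionals $\delta_x$; since the $\delta_x$ are linear, both corollaries apply and yield the same bound. Your explicit handling of the degenerate case $C=0$ (forcing $Y=\{0\}$) is a small improvement in clarity over the paper, which simply asserts $C>0$.
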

\begin{proof}
Indeed, suppose that $\delta_x : (Y,\|.\|_Y) \longrightarrow \R$ is continuous for each $x \in X$. Consider the map $\varphi: X \times Y \longrightarrow \R$ defined by $\varphi(x,f)=f(x)$ for all $(x, f)\in X\times Y$. This map satisfies the hypothesis of Corollary \ref{cor63}, so there exists $C \in \R$ such that $\sup_{x\in X} f(x)=\sup_{x\in X} \varphi(x,f)\leq C\|f\|_Y$ for all $f\in Y$. Thus by symmetry, $\sup_{x\in X} |f(x)|=\|f\|_{\infty}\leq C\|f\|_Y$ for all $f\in Y$. This implies that $C>0$ and so we take $\alpha:=\frac 1 C$. For the converse, we have $|\delta_x(f)|=|f(x)| \leq \|f\|_{\infty}\leq \frac 1 \alpha \|f\|_Y$ which shows that $\delta_x$ is continuous on $(Y,\|.\|_Y)$ since it is linear.
\end{proof}
\bibliographystyle{amsplain}

\end{document}